\colorlet{SurfaceColor}{orange!30!white}
\colorlet{BackSurfaceColor}{orange!20!white}
\colorlet{IdentityColor}{red!80!black}
\colorlet{IdentityLineColor}{gray}
\colorlet{IdentityColorMid}{red!50!white}
\colorlet{DefectColor}{blue!50!black}
\colorlet{GreenColor}{green!80!black}
\colorlet{SidelineColor}{black}
\tikzset{
	Surface/.style={ SurfaceColor, opacity=0.8 },
	BackSurface/.style={ BackSurfaceColor, opacity=0.8 },
	LineDefect/.style = {ultra thick, DefectColor },
	Sideline/.style = {	very thin , black },
	IdentitySlice/.style = { very thick , IdentityColor },
	IdentitySliceMid/.style = { very thick , IdentityColorMid },
	Multiplication/.style = {thick, BlueColor },
	IdentityLine/.style = {thin, IdentityLineColor,dashed},
	DefaultSettings/.style = {very thick,scale=0.7,color=blue!50!black, baseline=0.5cm}
}
\tikzset{
    string/.style={draw=#1, postaction={decorate}, decoration={markings,mark=at position .51 with {\arrow[draw=#1]{>}}}},
	costring/.style={draw=#1, postaction={decorate}, decoration={markings,mark=at position .51 with {\arrow[draw=#1]{<}}}},
	ostring/.style={draw=#1, postaction={decorate}, decoration={markings,mark=at position .47 with {\arrow[draw=#1]{>}}}},
	ustring/.style={draw=#1, postaction={decorate}, decoration={markings,mark=at position .56 with {\arrow[draw=#1]{>}}}},
	oostring/.style={draw=#1, postaction={decorate}, decoration={markings,mark=at position .43 with {\arrow[draw=#1]{>}}}},
	uustring/.style={draw=#1, postaction={decorate}, decoration={markings,mark=at position .59 with {\arrow[draw=#1]{>}}}},
	directed/.style={string=blue!50!black}, 
	odirected/.style={ostring=blue!50!black}, 
	udirected/.style={ustring=blue!50!black}, 
	oodirected/.style={oostring=blue!50!black}, 
	uudirected/.style={uustring=blue!50!black},     
	redirected/.style={costring= blue!50!black},
	redirectedgreen/.style={costring= green!50!black},
	directedgreen/.style={string= green!50!black},
	redirectedlightgreen/.style={costring= green!65!black},
	directedlightgreen/.style={string= green!65!black},
	redirectedred/.style={costring= red!50!black},
	directedred/.style={string= red!50!black},
}
\tikzset{-dot-/.style={decoration={
			markings,
			mark=at position 0.5 with {\fill circle (2pt);}},postaction={decorate}}}
\tikzset{
	Fdot/.style={circle, draw, fill, inner sep=0pt}, 
	Odot/.style={circle, draw, inner sep=0.1pt, minimum size=0.1cm}
}
\newcommand\tikzzbox[1]
\definecolor{Myblue}{rgb}{0,0,0.6}  
\newtheorem{theorem}{Theorem}[section]
\newtheorem{lemma}[theorem]{Lemma}
\newtheorem{proposition}[theorem]{Proposition}
\theoremstyle{definition}
\newtheorem{definition}[theorem]{Definition}
\newtheorem{remark}[theorem]{Remark}
\DeclareMathOperator{\Hom}{Hom}
\DeclareMathOperator{\im}{im}
\def\ev{\mathrm{ev}}
\def\coev{\mathrm{coev}}
\def\id{\mathrm{id}}
\def\Id{\mathrm{Id}}
\def\End{\mathrm{End}}
\def\Aut{\mathrm{Aut}}
\def\op{\mathrm{op}}
\def\1{{\mathbf1}}
\newcommand{\C}{{\mathcal{C}}}
\newcommand{\B}{{\mathcal{B}}}
\newcommand{\A}{{\mathbb{A}}}
\newcommand{\vphi}{{\varphi}}
\newcommand{\dm}[1]{d_{#1}}
\newcommand\isomto{\stackrel{\sim}{\smash{\longrightarrow}\rule{0pt}{0.4ex}}}
\newcommand{\iso}{\cong}
\renewcommand{\to}{\longrightarrow}
\renewcommand{\mapsto}{\longmapsto}
\numberwithin{figure}{section}
\newcommand{\pic}[2][0.75]{
	\begin{tikzpicture}[scale=0.5,baseline={([yshift=-.5ex]current bounding box.center)}]
	\node at (0,0) {\includegraphics[scale=#1]{pics/#2}};
	\end{tikzpicture}
}
\newcommand{\eqrefO}[1]{\hyperref[fig:O#1]{\text{(O#1)}}}
\newcommand{\eqrefT}[1]{\hyperref[fig:T#1]{\text{(T#1)}}}
\title{\boldmath Generalised Orbifolds and $G$-equivariantisation}
\author{Sebastian Heinrich$^\dagger$, Julia Plavnik$^{\ddagger *}$, Ingo Runkel$^\dagger$, Abigail Watkins$^\ddagger$
\\[0.5cm]
\normalsize\slshape $^\dagger$ Fachbereich Mathematik, Universität Hamburg, Germany
\\
\normalsize{\texttt{sebastian.heinrich, ingo.runkel@uni-hamburg.de}}
\\[0.5cm]
\normalsize\slshape $^\ddagger$ Department of Mathematics, Indiana University, Bloomington, USA
\\
\normalsize{\texttt{jplavnik, abwatk@iu.edu}}
\\[0.5cm]
\normalsize\slshape $^*$ Department of Mathematics and Data Science, Vrije Universiteit Brussel, Brussels, Belgium
\\}
\date{}
\begin{document}

\maketitle
\begin{abstract}
    In a construction motivated by topological field theory, a so-called orbifold datum $\A$ in a ribbon category $\C$ allows one to define a new ribbon category $\C_\A$. If $\C$ is the neutral component of a $G$-crossed ribbon category $\hat\B$, and $\A$ is an orbifold datum in $\C$ defined in terms of $\hat\B$, one finds that $\C_\A$ is equivalent to the equivariantisation $\hat\B^G$ of $\hat\B$ as a ribbon category. We give a constructive proof of this equivalence.
\end{abstract}

\vspace{4em}

\tableofcontents

\newpage

\section{Introduction}

Topological field theory has proven to be an inspiring bridge linking algebra and topology. One example of this is the so-called orbifold construction which we briefly describe below. On the algebraic side, it allows one to define new ribbon categories in terms of given ones. In this paper we show that a standard construction in the context of ribbon categories -- the equivariantisation of a $G$-crossed ribbon category -- is an instance of the orbifold construction. 

This relation has been conjectured before, and can in fact be shown relatively quickly via an abstract argument, which we sketch below. However, the main point of this paper is to give a constructive proof in a more general setting. Along the way we collect all the relevant definitions, which may in itself be useful as they are otherwise scattered across several papers using different conventions.

\medskip

Let us briefly introduce the players in this paper and state our main theorem. 

\medskip

We start with $G$-crossed ribbon categories and $G$-equivariantisation, see \cite{Turaev:2000ug,Kirillov:2001uz,Mueger:2002}, as well as \cite[Ch.\,8.24]{EGNO-book} and \cite[Ch.\,VI]{TuraevBook2010}.
Let $G$ be a finite group. A $G$-crossed (additive) ribbon category $\hat\B$ is, firstly, a $G$-graded monoidal category, $\hat\B = \bigoplus_{g \in G} \B_g$, whose tensor product is multiplicative with respects to the grading. It is also equipped with a right $G$-action which moves components by conjugation: the action of $h$ is a functor $\varphi(h)$ from $\B_g$ to $\B_{h^{-1}gh}$. Note that for $G$ non-commutative, $\hat\B$ cannot be braided as $X \otimes Y$ and $Y \otimes X$ would in general be in different graded components. Instead one introduces a $G$-crossed braiding, where, for $X \in \B_g$ and $Y \in B_h$ one has $c_{X,Y} : X \otimes Y \to Y \otimes \varphi(h)(X)$. The target object indeed has the same grade $hh^{-1}gh=gh$ as the source object. An analogous modification applies to the ribbon twist. The definitions are such that the neutral component $\B_e$ is an actual ribbon category.

A $G$-equivariant object is an object $X \in \hat\B$ together with a coherent family of isomorphisms $\eta_g : \varphi(g)(X) \to X$, $g \in G$. One finds that the category $\hat\B^G$ formed by these objects is a ribbon category. This is our first key player, and we review the definition in more detail in Section~\ref{sec:crossed_equiv}.

\medskip

Next we turn to the orbifold construction, or internal state sum construction. This was originially introduced in two-dimensional conformal field theory \cite{Frohlich:2009gb} and then formulated for topological field theories in arbitrary dimension \cite{Carqueville:2017aoe}. In a nutshell, it describes data assigned to a stratification which allows one to define a new TFT in terms of a given one. In the case of three-dimensional surgery TFT, the orbifold construction is developed in detail in \cite{Carqueville:2017ono,Carqueville:2018sld,Carqueville:2021edn}. The algebraic counterpart one finds is as follows (after placing it in an appropriately general context): Let $\C$ be an idempotent-complete additive ribbon category. An orbifold datum $\A$ in $\C$ consists of
\begin{itemize}
    \item a suitable algebra object $A \in \C$,
    \item an $A$-$(A\otimes A)$-bimodule $T$ in $\C$, 
    \item several additional morphisms,
\end{itemize}
all subject to a list of conditions. Part of these conditions state that $T$ defines a tensor product on $A$-mod, the category of $A$-modules in $\C$. An orbifold datum $\A$ in $\C$ allows one to define a new ribbon category $\C_\A$, the category of line defects \cite{Mulevicius:2022,Carqueville:2021edn}. We review this construction in Section~\ref{sec:orbdata}.

Written out explicitly, the data and conditions for $\A$ and $\C_\A$ may seem baroque, but they have very natural interpretations in TFT \cite{Carqueville:2017aoe,Carqueville:2021dbv}. On the algebraic side, they unify three standard constructions \cite{Carqueville:2018sld}:
\begin{itemize}
    \item Given a suitable commutative algebra $A$ in $\C$, the category $\C_A^\mathrm{loc}$ of local modules is again a ribbon category. From $A$ one can construct an orbifold datum $\A$ such that $\C_A^\mathrm{loc} \cong \C_\A$ as ribbon categories. 

    \item Take $\C$ to be the category of finite-dimensional complex vector spaces. Given a spherical fusion category $\mathcal{S}$, its Drinfeld centre $\mathcal{Z}(\mathcal{S})$ is a ribbon category. From $\mathcal{S}$ one can construct an orbifold datum $\A$ in $\C = \mathrm{vect}$, such that $\mathcal{Z}(\mathcal{S}) \cong \C_\A$ as ribbon categories.

    \item Take $\C = \B_e =: \B$ to be the neutral component of a $G$-crossed ribbon category $\hat\B$. From $\hat\B$ one can construct an orbifold datum $\A$ in $\B$ such that the category of line defects agrees with the equivariantisation, $\hat\B^G \cong \B_\A$  as ribbon categories. Here, $A = \bigoplus_{g \in G} A_g$ is such that $A_g$-mod is equivalent to the graded component $\B_g$, and $T$ encodes the tensor product of $\hat\B$. We review this construction in more detail in Section~\ref{sec:G-crossed-to-orb}.
\end{itemize}
The ribbon equivalence in the last point is only conjectured in \cite[Rem.\,5.5]{Carqueville:2018sld} in a more restricted setting, and our main result is
(up to the existence of certain square roots which could be absorbed into changing conventions):

\medskip

\noindent
\textbf{Theorem.} (Theorem~\ref{thm:main})
\textit{Let $G$ be a finite group and $\hat\B$ an additive idempotent-complete $G$-crossed ribbon category. Suppose that  $|G|$ is invertible and 
that each graded component $\B_g$ contains an object with invertible quantum dimension. Then from $\hat\B$ one can construct an orbifold datum $\A$ in $\B = \B_e$ such that $\B_\A \cong \hat{\B}^G$ as ribbon categories.}

\medskip

We prove this by providing, in Section~\ref{sec:equiv}, an explicit ribbon equivalence $E : \hat{\B}^G \to \B_\A$.

\medskip

To conclude the introduction, let us sketch how to obtain an abstract proof of the above theorem, which however needs a more restrictive setting. Namely, let $\hat\B$ be a $G$-crossed ribbon fusion category such that $\B = \B_e$ is modular. By \cite[Thm.\,2]{Cui:2015cbf}, $\B \boxtimes (\hat\B^G)^\text{rev} \cong \mathcal{Z}(\hat\B)$ (this is stated in \cite{Cui:2015cbf} as a braided equivalence, and under the additional assumption of unitarity).
On the other hand, by \cite[Prop.\,7.4]{Mulevicius:2022gce} we have 
$\B \boxtimes (\B_\A)^\text{rev} \cong \mathcal{Z}(\B_\A^2)$ as ribbon categories for a suitable $\B_\A^2$ defined in \cite{Mulevicius:2022gce}. One now needs to show that $\B_\A^2 \cong \hat\B$ as fusion categories. It is straightforward to get a functor $\hat\B \cong A\text{-mod} \to \B_\A^2$, and it remains
to show that it is a monoidal equivalence compatible with the half braidings for $\B$.
Note that this proof strategy does not give an explicit functor between $\hat{\B}^G$ and $\B_\A$.

\subsection*{Acknowledgements}

The authors thank 
    Nils Carqueville, 
    Vincentas Mulevi\v cius, 
    and
    Sean Sanford 
    for helpful discussions.
We also thank Nils Carqueville and Benjamin Haake for coordinating the arXiv submission of the present paper with their paper \cite{Carqueville-Haake}, where -- amongst other results -- they independently prove Theorem~\ref{thm:main} via the same explicit equivalence. 

JP and AW are grateful for the hospitality and excellent working conditions at the Department of Mathematics at the Universit\"at Hamburg, where this project started and part of it was completed. 
SH thanks the Department of Mathematics of Indiana University for hospitality in summer 2024, where part of the project was completed. 

SH and IR thank  the  Deutsche Forschungs\-gemeinschaft (DFG, German Research Foundation) under Germany's Excellence Strategy - EXC 2121 ``Quantum Universe'' - 390833306,  and the Collaborative Research Center - SFB 1624 ``Higher structures, moduli spaces and integrability'' - 506632645, for support.
JP and AW thank the NSF under grant DMS-2146392 and the Simons Foundation under the Award 889000 as part of the Simons Collaboration on Global Categorical Symmetries for support. JP thanks the Alexander von Humboldt Foundation since part of this work was completed as a Humboldt Experienced Fellow at Universit\"at Hamburg.

\subsection*{Conventions}

Throughout this paper, $G$ is a finite group, and $\hat\B$ and $\C$ are idempotent-complete additive categories. Functors between additive categories are always taken to be additive.
We do not require the categories to be linear over a field.

In addition $\C$ is ribbon, and $\hat\B$ is $G$-crossed ribbon (we recall our conventions in Section~\ref{sec:crossed_equiv}). 
The graded components of $\hat\B$ are denoted by $\hat{\B} = \bigoplus_{g \in G} \B_g$ with neutral component $\B := \B_e$.
We assume the monoidal structure of $\C$ and $\hat\B$ to be strict to simplify notation.

All string diagrams are written using the optimistic convention and are read bottom to top and left to right. 

\section{\texorpdfstring{\boldmath $G$}{G}-crossed ribbon categories and equivariantisation}\label{sec:crossed_equiv}

In this section, we quickly recall the notion of a $G$-crossed ribbon category $\hat\B$ and its $G$-equivariantisation $\hat\B^G$. For more details we refer to \cite[Ch.\,8.24]{EGNO-book} (which defines $G$-crossed braided categories with coherences, but does not define the $G$-crossed ribbon structure) and \cite[Ch.\,VI.2]{TuraevBook2010} (which includes the ribbon case but assumes the $G$-action to be strict). The definition in \cite[Sec.\,3]{Galindo:2024} is closest to what we use here, as there the underlying category is strict, but the action is not. In Section~\ref{sec:equiv}, we will assume that the $G$-action is also strict, but until then we will keep track of the associated natural isomorphisms. Note that throughout this paper we use right actions while these sources use left actions.

\medskip

Let $\hat\B = \bigoplus_{g\in G}\B_g$ be an additive monoidal category which is faithfully graded, i.e.\ $\B_g\not= \emptyset$ for all $g\in G$, and let $\Aut_\otimes(\hat\B)$ be the monoidal category of its (additive and) monoidal autoequivalences and monoidal natural transformations.

Let $\underline{G^{op}}$ denote the discrete monoidal category whose objects are the group elements of $G$ with tensor product given by reversing the group multiplication so that $g \otimes h = hg$.
A \textit{right $G$-action} on a monoidal category $\C$ is a monoidal functor 
\[
\varphi\colon  \underline{G^\op}\to \Aut_\otimes(\C). 
\]
We will denote the associated natural transformations by 
\[
\mu_g^{X,Y}:= \vphi(g)(X \otimes Y) \isomto \vphi(g)(X) \otimes \vphi(g) (Y)
\quad \text{and} \quad
\gamma_{g,h}^X: \vphi(g)\vphi(h) (X) \isomto \vphi(hg)(X)  \ ,
\]
and refer to them as \textit{tensorators} and \textit{compositors}, respectively.
Without loss of generality we may assume that 
\[
\varphi(e)=\Id ,
\]
rather than the two just being naturally isomorphic. 
The isomorphism for the tensor unit is written as $\mu_g^0 : \varphi(g)(\1) \isomto \1$.

\begin{remark}\label{rem:G-strict}
Without loss of generality, we can and will assume that $\hat\B$ is strict as a monoidal category. By \cite[Thm.\,1.1]{Galindo:2016}, we may also assume that the right action is \textit{strict} in the sense that the natural isomorphism $\mu_g$ and $\gamma_{g,h}$ above are identities. We will make use of this result in  Section~\ref{sec:equiv}, but for now we keep these isomorphisms explicit.
\end{remark}

\begin{definition}
    A \textit{(right) $G$-crossed ribbon} category is a faithfully graded additive monoidal category $\hat\B=\bigoplus_{g\in G}\B_g$ with:
    \begin{itemize}
        \item A right action $\varphi\colon  \underline{G^\op}\to \Aut_\otimes(\hat\B)$ satisfying $\varphi(h)(\B_g)\subset \B_{h^{-1}gh}$, for all $g,h\in G$.
        \item A \textit{$G$-braiding} consisting of natural isomorphisms $c_{X,Y_h}\colon X\otimes Y_h\to Y_h\otimes \varphi(h)(X)$, for all $X\in \hat\B$, $Y_h\in \B_h$, satisfying:
\begin{enumerate}
    \item (Compatibility with the $G$-action)  
    \[\adjustbox{scale = 0.85, center}{\begin{tikzcd}
        {\varphi(g)(X \otimes Y_h)} && {\varphi(g)(Y_h \otimes \varphi(h)(X))} && {\varphi(g)(Y_h) \otimes \varphi(g)\varphi(h)(X)} \\
    	\\
        {\varphi(g)(X) \otimes \varphi(g)(Y_h)} && {\varphi(g)(Y_h) \otimes \varphi(g^{-1}hg)\varphi(g)(X)} && {\varphi(g)(Y_h) \otimes \varphi(hg)(X)}
        \arrow["{\varphi(g)c_{X,Y_h}}", from=1-1, to=1-3]
        \arrow["{\mu_{g}^{X,Y_h}}"', from=1-1, to=3-1]
        \arrow["{\mu_g^{Y_h,\varphi(h)(X)}}", from=1-3, to=1-5]
        \arrow["{\id_{\varphi(g)(Y_h)}\otimes \gamma^X_{g,h}}", from=1-5, to=3-5]
        \arrow["{c_{\varphi(g)(X), \varphi(g)(Y_h)}}"', from=3-1, to=3-3]
        \arrow["{\id_{\varphi(g)(Y)} \otimes \gamma^X_{g^{-1}hg,g}}"', from=3-3, to=3-5]
    \end{tikzcd}}\]

    \item (Twisted hexagon identities) Fo all $Z_k \in \B_k$,
    \[\begin{tikzcd}
	{X \otimes Y_h \otimes Z_k} &&& {Z_k \otimes \varphi(k)(X \otimes Y_h)} \\
	\\
	{X \otimes Z_k \otimes \varphi(k) (Y_h)} &&& {Z_k \otimes \varphi(k)(X) \otimes \varphi(k)(Y_h)}
	\arrow["{c_{X \otimes Y_h, Z_k}}", from=1-1, to=1-4]
	\arrow["{\id_X \otimes c_{Y_h,Z_k}}"', from=1-1, to=3-1]
	\arrow["{\id_{Z_k}\otimes\mu_k^{X,Y_h}}", from=1-4, to=3-4]
	\arrow["{c_{X,Z_k} \otimes \id_{\varphi(k)(Y_h)}}"', from=3-1, to=3-4]
    \end{tikzcd}\]
    \[\begin{tikzcd}
	{X \otimes Y_h \otimes Z_k} &&& {Y_h \otimes Z_k \otimes \varphi(hk)(X)} \\
	\\
	{Y_h \otimes \varphi(h)(X) \otimes Z_k} &&& {Y_h  \otimes Z_k \otimes \varphi(k) \varphi(h)(X) }
	\arrow["{c_{X, Y_h \otimes Z_k}}", from=1-1, to=1-4]
	\arrow["{c_{X,Y_h} \otimes \id_{Z_k}}"', from=1-1, to=3-1]
	\arrow["{\id_{{Y_h} \otimes Z_k} \otimes (\gamma^X_{k,h})^{-1}}", from=1-4, to=3-4]
	\arrow["{\id_{Y_h} \otimes c_{\varphi(h)(X),Z_k} }"', from=3-1, to=3-4]
    \end{tikzcd}\]
\end{enumerate}
\item A \textit{$G$-ribbon twist} consisting of a natural isomorphism $\theta_{Y_h} : Y_h \to \varphi(h)(Y_h)$ such that $\theta_{\1}= \id_{\1} $ and satisfying:
\begin{enumerate}
    \item (Compatibility with the $G$-braiding)
    \[\begin{tikzcd}
	{X_g \otimes Y_h} &&&& \varphi(gh)(X_g \otimes Y_h) \\
	\\
	{Y_h \otimes \varphi(h)(X_g)} &&&& {\varphi(gh)(X_g)  \otimes \varphi(gh)(Y_h) }
    \\
	\\
	{ \varphi(h)(X_g) \otimes \varphi(h^{-1}gh) (Y_h)} &&&& { \varphi(h)\varphi(g) (X_g) \otimes \varphi(h^{-1}gh) \varphi(h)(Y_h)}
	\arrow["{\theta_{Xg\otimes Y_h }}", from=1-1, to=1-5]
	\arrow["{c_{X_g,Y_h}}"', from=1-1, to=3-1]
	\arrow["{ \mu^{X_g,Y_h}_{gh}}", from=1-5, to=3-5]
    \arrow["{c_{Y_h,\varphi(h)(X_g)}}"', from=3-1, to=5-1]
	\arrow["{ (\gamma^{X_g}_{h,g})^{-1}\otimes(\gamma^{Y_h}_{h^{-1}gh,h})^{-1}}", from=3-5, to=5-5]
	\arrow["{\varphi(h)(\theta_{X_g})\otimes \varphi(h^{-1}gh)(\theta_{Y_h}) }"', from=5-1, to=5-5]
    \end{tikzcd}\]
    \item (Compatibility with the $G$-action)
\[\begin{tikzcd}
	{\varphi(g)(Y_h)} && {\varphi(g)\varphi(h)(Y_h)} \\
	\\	{\varphi(g^{-1}hg)\varphi(g)(Y_h)} && {\varphi(hg)(Y_h)}
	\arrow["{\varphi(g)(\theta_{Y_h})}", from=1-1, to=1-3]
	\arrow["{\theta_{\varphi(g)({Y_h})}}"', from=1-1, to=3-1]
\arrow["{\gamma^{Y_h}_{g,h}}", from=1-3, to=3-3]
	\arrow["{\gamma^{Y_h}_{g^{-1}hg,g}}"', from=3-1, to=3-3]
    \end{tikzcd}\]
    \item (Compatibility with duals)
    \[\adjustbox{scale = 0.95, pagecenter}{\begin{tikzcd}
	{\1} &
    \varphi(g^{-1})(\1) 
    && \varphi(g^{-1})\big(X_g \otimes X_g^*\big) &&& {\varphi(g^{-1})(\varphi(g)(X_g)\otimes X_g^*)} \\
	\\
   X_g \otimes X_g^* &&& {X_g  \otimes \varphi(g^{-1})(X_g^*) } &&& {\varphi(g^{-1})\varphi(g)(X_g)\otimes \varphi(g^{-1})(X_g^*)}
	\arrow["{(\mu_{g^{-1}}^0)^{-1}}", from=1-1, to=1-2]
	\arrow["{\varphi(g^{-1})\big(\coev_{X_g}\big)}", from=1-2, to=1-4]
	\arrow["{\coev_{X_g}}"', from=1-1, to=3-1]
	\arrow["{\varphi(g^{-1})(\theta_{X_g}\otimes \id_{X^*_g})}", from=1-4, to=1-7]
	\arrow["{\id_{X_g} \otimes \theta_{X^*_g} }"', from=3-1, to=3-4]
    \arrow["{(\gamma_{g^{-1},g}^{X_g})^{-1}\otimes\id_{\varphi(g^{-1})(X_g^*)}}"', from=3-4, to=3-7]
    \arrow["{\mu^{\varphi(g)(X_g), X^*_g}_{\varphi(g^{-1})}}", from=1-7, to=3-7]
    \end{tikzcd}}\]
\end{enumerate}        
    \end{itemize}
\end{definition}

We can adapt the graphical calculus for braided monoidal categories to $G$-crossed braided categories. However, we have to carefully keep track of the labels of the strings, since every time we braid, the upper strand catches an action of the component of the lower strand, as seen in
\[c_{X,Y_h} =
\begin{tikzpicture}[very thick,scale=0.5,color=blue!50!black, baseline]
\draw[color=blue!50!black] (-1,-1) node[below] (A1) {{\scriptsize$X$}};
\draw[color=blue!50!black] (1,-1) node[below] (A2) {{\scriptsize$Y_h$}};
\draw[color=blue!50!black] (-1,1) node[above] (B1) {{\scriptsize$Y_h\vphantom{\varphi}$}};
\draw[color=blue!50!black] (1,1) node[above] (B2) {{\scriptsize$\varphi(h)(X)$}};
\draw[color=blue!50!black] (A2) -- (B1);
\draw[color=white, line width=4pt] (A1) -- (B2);
\draw[color=blue!50!black] (A1) -- (B2);
\end{tikzpicture} 
\colon X\otimes Y_h \to Y_h \otimes \varphi(h)(X).\]
For the opposite crossing we will keep the convention that the out-going object of the string that crosses over gets acted upon,
\[\tilde c_{Y_h,X} =
\begin{tikzpicture}[very thick,scale=0.5,color=blue!50!black, baseline]
\draw[color=blue!50!black] (-1,-1) node[below] (A1) {{\scriptsize$Y_h$}};
\draw[color=blue!50!black] (1,-1) node[below] (A2) {{\scriptsize$X$}};
\draw[color=blue!50!black] (-1,1) node[above] (B1) {{\scriptsize$\varphi(h^{-1})(X)$}};
\draw[color=blue!50!black] (1,1) node[above] (B2) {{\scriptsize$Y_h\vphantom{\varphi}$}};
\draw[color=blue!50!black] (A1) -- (B2);
\draw[color=white, line width=4pt] (A2) -- (B1);
\draw[color=blue!50!black] (A2) -- (B1);
\end{tikzpicture} 
\colon Y_h\otimes X \to \varphi(h^{-1})(X) \otimes Y_h.\]
We define $\tilde c_{Y_h,X}$ in terms of the inverse braiding as
\[
\tilde c_{Y_h,X} =
\Big[ 
Y_h\otimes X
\xrightarrow{\id \otimes (\gamma_{h,h^{-1}}^{X})^{-1}}
Y_h\otimes \varphi(h)\varphi(h^{-1})(X)
\xrightarrow{c_{X,Y_h}^{-1}}
\varphi(h^{-1})(X) \otimes Y_h \Big].
\]

\medskip

Next we recall the notion of $G$-equivariantisation, see \cite[Sec. 4.15]{EGNO-book} and \cite[App.\,5]{TuraevBook2010}.

\begin{definition}
    Let $\mathcal{M}$ be a strict monoidal category with $G$-action $\varphi$. The \textit{$G$-equivariantisation} $\mathcal{M}^G$ of $\mathcal{M}$ is the monoidal category in which:
\begin{itemize}
    \item 
    Objects are pairs $\mathbf{X} = (X,(\eta^X_g)_{g\in G})$, where $X\in \mathcal{M}$ and $\eta^X_g\colon \varphi(g)(X)\isomto X$ belongs to a family of isomorphisms in $\mathcal{M}$ satisfying
    \[
    \begin{tikzcd}
    \varphi(g)\varphi(h)(X) \arrow[d, "\gamma_{g,h}"'] \arrow[r, "\varphi(g)(\eta^X_h)"] & \varphi(g)(X) \arrow[d, "\eta^X_g"] \\
    \varphi(hg)(X) \arrow[r, "\eta^X_{hg}"'] & X 
    \end{tikzcd}
    \]
    for all $h\in G$.
\item
    Morphisms $f\colon \mathbf{X} \to \mathbf{Y}$ are morphisms $f\colon X\to Y$ in $\mathcal{M}$ for which
    \[\begin{tikzcd}
    \varphi(g)(X) \arrow[r, "\eta^X_g"] \arrow[d, "\varphi(g)(f)"'] & X \arrow[d, "f"] \\
    \varphi(g)(Y) \arrow[r, "\eta^Y_g"'] & Y               
    \end{tikzcd}
    \]
    commutes for all $g\in G$.
\item
    The monoidal product is given by
    \[ \mathbf{X} \otimes \mathbf{Y} \coloneqq \left(X\otimes Y, \left[\varphi(g)(X\otimes Y)\xrightarrow[]{\mu_g^{X,Y}}\varphi(g)(X)\otimes \varphi(g)(Y)\xrightarrow[]{\eta^X_g\,\otimes\,\eta^Y_g}X\otimes Y\right]_{g\in G}\right).\]
    As $\mathcal{M}$ is strict, one checks that this product is again strictly associative.
\item  
    The monoidal unit in $\mathcal{M}^G$ is $\1 \coloneqq (\1,\eta^\1)$ with $\eta^\1_g: \varphi(g)(\1) \to \1$ given by the structure morphism $\mu_g^0$ of the monoidal structure of $\varphi(g) : \mathcal{M} \to \mathcal{M}$.
\end{itemize}
\end{definition}

We remark that since $\varphi(g)(X_h) \in \B_{g^{-1}hg}$, in terms of components $X_h$ of $X$ we have
\[
\eta_g^X\big|_{\varphi(g)(X_h)} : \varphi(g)(X_h) \isomto X_{g^{-1}hg} \ .
\]

The $G$-equivariantisation of a $G$-crossed ribbon category is a ribbon category, see \cite[Ch.\,8.24]{EGNO-book} (for the braided case) and \cite[Sec.\,3.4]{Galindo:2024} (for the ribbon case).

\begin{proposition}
    Let $\hat\B$ be a $G$-crossed ribbon category. Then $\hat\B^G$ is a ribbon category with braiding $c^{\hat\B^G}_{\mathbf X,\mathbf Y} = \bigoplus_{h\in G} c^{\hat\B^G}_{X,Y_h}$, 
    and ribbon twist  $\theta^{\hat\B^G}_{\mathbf Y} = \bigoplus_{h\in G} \theta^{\hat\B^G}_{Y_h}$, 
    where
\begin{align*}    
    c^{\hat\B^G}_{X,Y_h} &\coloneqq \Big[ X\otimes Y_h \xrightarrow[]{c^{\hat\B}_{X,Y_h}} Y_h\otimes \varphi(h)(X)\xrightarrow[]{\id\,\otimes\,\eta^X_h} Y_h\otimes X \Big] \ ,
    \\
    \theta^{\hat\B^G}_{Y_h} &\coloneqq \Big[ Y_h \xrightarrow[]{\theta^{\hat\B}_{Y_h}} \varphi(h)(Y_h)\xrightarrow[]{\eta^{Y_{h}}_h} Y_h \Big] \ .
\end{align*}    

\end{proposition}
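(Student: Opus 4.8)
The strategy is to verify each ingredient of the ribbon structure on $\hat\B^G$ by reducing it to the corresponding axiom of the $G$-crossed ribbon structure on $\hat\B$, using the equivariance coherence $\eta^X_g\circ\varphi(g)(\eta^X_h)=\eta^X_{hg}\circ\gamma^X_{g,h}$ of the family $(\eta^X_g)_{g\in G}$ to absorb the stray $\varphi$-actions produced by the $G$-crossed data. Throughout I keep the tensorators $\mu_g$ and compositors $\gamma_{g,h}$ explicit, since strictness of the action is only invoked in Section~\ref{sec:equiv}. As a preliminary I would record that $\hat\B^G$ is rigid: the dual of $\mathbf X=(X,\eta^X)$ is $(X^*,\eta^{X^*})$, with $\eta^{X^*}_g$ assembled from $(\eta^X_g)^{-1}$, the tensorators, the compositors, and the (co)evaluation maps of $\hat\B$, and the (co)evaluations of $\hat\B$ descend to $\hat\B^G$. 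This rigidity is what makes the last ribbon axiom (compatibility with duals) meaningful.

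First I would check well-definedness, namely that $c^{\hat\B^G}_{X,Y_h}$ is a morphism $\mathbf X\otimes\mathbf Y\to\mathbf Y\otimes\mathbf X$ in $\hat\B^G$ and that $\theta^{\hat\B^G}_{Y_h}$ is an endomorphism of $\mathbf Y$ there. Concretely this means these maps intertwine the equivariant structures $(\eta^X_g\otimes\eta^Y_g)\circ\mu^{X,Y}_g$ on source and target. For the braiding, the $c^{\hat\B}$-part of the square is supplied by the \emph{Compatibility with the $G$-action} axiom of the $G$-braiding, and the accompanying factor $\id\otimes\eta^X_h$ is transported through using the $\eta$-coherence above; for the twist one uses instead the \emph{Compatibility with the $G$-action} axiom of the $G$-ribbon twist. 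Naturality of $c^{\hat\B^G}$ and $\theta^{\hat\B^G}$ then follows from naturality of $c^{\hat\B}$ and $\theta^{\hat\B}$ together with the fact that morphisms in $\hat\B^G$ commute with all $\eta_g$, which lets the $\eta$-factors slide past any equivariant morphism.

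The heart of the argument is the pair of hexagon identities for $c^{\hat\B^G}$. I would substitute the definition of $c^{\hat\B^G}$ into each ordinary hexagon and reduce it to one of the two \emph{Twisted hexagon identities} for $c^{\hat\B}$. The mechanism is that every crossing $c^{\hat\B}_{X,Y_h}$ leaves a factor $\varphi(h)(X)$, which is precisely cancelled by its companion $\eta^X_h$; after rewriting iterated actions $\varphi(k)\varphi(h)$ through $\gamma^X_{k,h}$ and collapsing $\eta^X_k\circ\varphi(k)(\eta^X_h)$ to $\eta^X_{hk}\circ\gamma^X_{k,h}$, the twist in the target grading disappears and the twisted hexagon becomes the untwisted one. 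The compatibility of $\theta^{\hat\B^G}$ with the braiding is handled the same way, deducing $\theta^{\hat\B^G}_{\mathbf X\otimes\mathbf Y}=(\theta^{\hat\B^G}_{\mathbf X}\otimes\theta^{\hat\B^G}_{\mathbf Y})\circ c^{\hat\B^G}_{\mathbf Y,\mathbf X}\circ c^{\hat\B^G}_{\mathbf X,\mathbf Y}$ from the \emph{Compatibility with the $G$-braiding} axiom of the twist; the relation $\theta^{\hat\B^G}_{\mathbf X^*}=(\theta^{\hat\B^G}_{\mathbf X})^*$ follows from \emph{Compatibility with duals} using the dual equivariant structure from the preliminary step; and $\theta^{\hat\B^G}_{\1}=\id_{\1}$ is immediate from $\theta^{\hat\B}_{\1}=\id_{\1}$ and the definition of $\eta^{\1}$.

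I expect the main obstacle to be exactly the hexagon step and the twist--braiding compatibility: all the genuinely nontrivial content sits there, and the difficulty is purely the careful bookkeeping of the compositors $\gamma_{g,h}$ and tensorators $\mu_g$, so that the leftover $\varphi$-actions generated by each crossing are matched precisely against the factors of $\eta$ carried by the equivariant objects. Once this cancellation is organised correctly, each $G$-crossed axiom turns into its ordinary braided or ribbon counterpart. Since this statement is a recollection of known results, one may alternatively cite \cite{EGNO-book} for the braided part and \cite{Galindo:2024} for the ribbon part.
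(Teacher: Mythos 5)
Your proposal is correct in outline, but it takes a genuinely different route from the paper for a simple reason: the paper does not prove this proposition at all. It is stated as a recollection of known results, with the braided case delegated to \cite[Ch.\,8.24]{EGNO-book} and the ribbon case to \cite[Sec.\,3.4]{Galindo:2024} --- which is exactly the fallback you mention in your closing sentence. Your direct verification is therefore the self-contained alternative, and its key mechanism is the right one: each crossing $c^{\hat\B}_{X,Y_h}$ leaves a factor $\varphi(h)(X)$ that is cancelled by $\eta^X_h$, iterated actions are collapsed via the coherence $\eta^X_g \circ \varphi(g)(\eta^X_h) = \eta^X_{hg} \circ \gamma^X_{g,h}$, and then each $G$-crossed axiom specialises to its ordinary counterpart. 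Your assignment of axioms to tasks is also accurate: compatibility of the $G$-braiding (resp.\ $G$-twist) with the $G$-action gives well-definedness of $c^{\hat\B^G}$ (resp.\ $\theta^{\hat\B^G}$) as morphisms in $\hat\B^G$; the two twisted hexagons give the two ordinary hexagons (one needs only the interchange law, the other needs naturality of $c^{\hat\B}$ plus the $\eta$-coherence); compatibility with the $G$-braiding gives $\theta_{\mathbf X\otimes\mathbf Y}=(\theta_{\mathbf X}\otimes\theta_{\mathbf Y})\circ c_{\mathbf Y,\mathbf X}\circ c_{\mathbf X,\mathbf Y}$; and compatibility with duals gives $\theta_{\mathbf X^*}=(\theta_{\mathbf X})^*$ once the equivariant structure on $X^*$ is set up as you describe. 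What each approach buys: the paper's citation keeps Section~\ref{sec:crossed_equiv} a lean review of standard material, while your verification makes the statement self-contained and rehearses precisely the bookkeeping ($\eta$ against $\varphi$, with $\gamma$ and $\mu$ explicit) that the paper later relies on in its Section~\ref{sec:equiv} computations. To turn your sketch into a complete proof you would still need to write out the hexagon and twist diagrams in full --- routine but not short --- or simply cite, as the paper does.
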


\section{Orbifold data and the category of line defects}\label{sec:orbdata}
In this section we review the definition of orbifold data in a ribbon category from \cite{Carqueville:2018sld} and the construction of the corresponding category of line defects as introduced in \cite{Mulevicius:2022}.

We will need a little bit of background on Frobenius algebras and relative tensor products. For the definition of a symmetric Frobenius algebra in the present conventions, we refer to \cite[Sec.\,2.2]{Mulevicius:2022}. 
Here we just note that $\Delta$-separability of $A$ means that the coproduct $\Delta$ is a separability idempotent for $A$, that is, $\mu \circ \Delta = \id_A$. This implies that the relative tensor product $\otimes_A$ can be written as the image of an idempotent, which is the reason that we insist on idempotent-complete categories. 

In more detail, consider a right $A$-module $M$ (action $\rho_M$) and a left $A$-module $N$ (action $\rho_N$). 
The relative tensor product $M \otimes_A N$ is given by the image of the idempotent 
\[
P_{M,N} = \big[ M \otimes N \xrightarrow{\id \otimes (\Delta \circ \eta) \otimes \id} M \otimes A \otimes A \otimes N \xrightarrow{\rho_M \otimes \rho_N} M \otimes N \big] \ ,
\]
see e.g.\ \cite[Lem.\,1.21]{Kirillov:2001ti} or \cite[App.\,A]{Barmeier:2007idm}. Since $\C$ is idempotent-complete, we obtain projection and embedding maps
\[
\pi_{M,N} : M \otimes N \to M \otimes_A N
~~,\quad
\iota_{M,N} : M \otimes_A N \to M \otimes N
\]
such that
\[
\pi_{M,N} \circ \iota_{M,N} = \id_{M \otimes_A N}
~~,\quad
\iota_{M,N} \circ \pi_{M,N} = P_{M,N}.
\]
Because of this, we will often use the same notation for a map $M \otimes_A N \to M' \otimes_A N'$ and for the corresponding map $M \otimes N \to M' \otimes N'$ obtained via the embedding/projection maps, especially in string diagrams.

The next definition involves modules over tensor products of algebras. The convention for turning the tensor product of two algebras $A,B \in \C$ into an algebra is
\[
A \otimes B \otimes A \otimes B \xrightarrow{\id \otimes c_{B,A} \otimes \id} A \otimes A \otimes B \otimes B
\xrightarrow{\mu_A \otimes \mu_B} A \otimes B ,
\]
rather than using the inverse braiding $(c_{A,B})^{-1}$.

\begin{figure}[p!]
	\captionsetup[subfigure]{labelformat=empty}
	\centering
	\begin{subfigure}[b]{0.5\textwidth}
		\centering
		\pic[1.5]{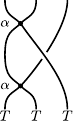}$=$\pic[1.5]{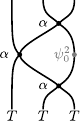}
		\caption{}
		\label{fig:O1}
	\end{subfigure}\hspace{-2em}\raisebox{5.5em}{(O1)}\\
	\begin{subfigure}[b]{0.4\textwidth}
		\centering
		\pic[1.5]{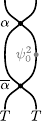}$=$\pic[1.5]{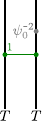}
		\caption{}
		\label{fig:O2}
	\end{subfigure}\hspace{-2em}\raisebox{5.5em}{(O2)}
	\begin{subfigure}[b]{0.4\textwidth}
		\centering
		\pic[1.5]{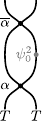}$=$\pic[1.5]{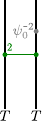}
		\caption{}
		\label{fig:O3}
	\end{subfigure}\hspace{-2em}\raisebox{5.5em}{(O3)}\\
	\begin{subfigure}[b]{0.4\textwidth}
		\centering
		\pic[1.5]{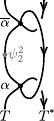}$=$\pic[1.5]{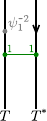}
		\caption{}
		\label{fig:O4}
	\end{subfigure}\hspace{-2em}\raisebox{5.5em}{(O4)}
	\begin{subfigure}[b]{0.4\textwidth}
		\centering
		\pic[1.5]{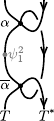}$=$\pic[1.5]{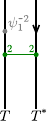}
		\caption{}
		\label{fig:O5}
	\end{subfigure}\hspace{-2em}\raisebox{5.5em}{(O5)}\\
	\begin{subfigure}[b]{0.4\textwidth}
		\centering
		\pic[1.5]{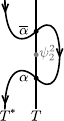}$=$\pic[1.5]{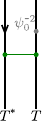}
		\caption{}
		\label{fig:O6}
	\end{subfigure}\hspace{-2em}\raisebox{5.5em}{(O6)}
	\begin{subfigure}[b]{0.4\textwidth}
		\centering
		\pic[1.5]{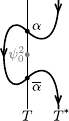}$=$\pic[1.5]{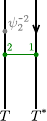}
		\caption{}
		\label{fig:O7}
	\end{subfigure}\hspace{-2em}\raisebox{5.5em}{(O7)}\\
	\begin{subfigure}[b]{0.8\textwidth}
		\centering
		\pic[1.5]{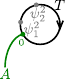}$=$\pic[1.5]{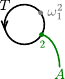}$=$\pic[1.5]{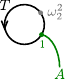}$=$\pic[1.5]{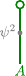}$\cdot \phi^{-1}$
		\caption{}
		\label{fig:O8}
	\end{subfigure}\hspace{-2em}\raisebox{4.25em}{(O8)}

\vspace*{-1em}
\caption{Identities an orbifold datum $\A$ has to satisfy. 
We use the convention explained before Definition~\ref{def:orb-datum}, i.e.\ we use the name $\alpha$ for the map $T \otimes T \to T \otimes T$ obtained from $\alpha\colon T\otimes_2 T\to T\otimes_1 T$ via projection/embedding maps, etc.} 
\label{fig:orb_ids}
\end{figure}

\begin{definition}[{\cite[Def.\,3.4]{Carqueville:2018sld}}]\label{def:orb-datum}
Let $\C$ be an idempotent-complete additive ribbon category. 
An orbifold datum $\A$ in $\C$ is a tuple $\A=(A,T,\alpha,\overline{\alpha},\psi,\phi)$, where
\begin{itemize}
    \item $A$ is a $\Delta$-separable symmetric Frobenius algebra in $\C$,
    \item $T$ is an $A$-$(A\otimes A)$-bimodule in $\C$, and we indicate left action by a subindex of $0$, and the right actions of the first and second
tensor factor by subindexes $1$, and $2$,
    \item $\alpha\colon T\otimes_2 T\to T\otimes_1 T$ is an $A$-$AAA$-bimodule morphism (where we stopped writing out all tensor product symbols),
    \item $\overline{\alpha}\colon T\otimes_1 T\to T\otimes_2 T$ is an $A$-$AAA$-bimodule morphism,
    \item $\psi\in \End_{AA}(A)$ is an invertible
 $A$-$A$-bimodule morphism, and
    \item $\phi$ is an invertible element in $\End(\1)$.\footnote{We follow the conventions in \cite[Def.\,2.2]{Mulevicius:2022} and not that of \cite[Def.\,3.4]{Carqueville:2018sld}. Namely, \eqrefO{8} involves $\phi^{-1}$ and not $\phi^{-2}$.}
\end{itemize}
These data have to satisfy the axioms \eqrefO{1}--\eqrefO{8} 
in Figure \ref{fig:orb_ids} where the following notation is used:
\begin{equation*}
\pic[1.5]{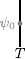}   := \pic[1.5]{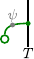}, 	\quad
\pic[1.5]{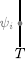}   := \pic[1.5]{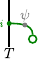}, 	\quad
\pic[1.5]{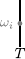} := \pic[1.5]{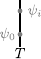},   \quad
i = 1,2.
\end{equation*}
\end{definition}

\begin{definition}[{\cite[Def.\,3.1]{Mulevicius:2022}}]\label{def:cat-of-wilson}
Fix an orbifold datum $\A$ in $\C$. 
The \textit{category of line defects} 
$\C_\A$ is defined by:
\begin{itemize}
    \item \textit{Objects}: An object is a tuple $(M, \tau_1, \tau_2, \overline{\tau_1}, \overline{\tau_2})$ consisting of an 
    $A$-$A$-bimodule $M$, and 
    \begin{itemize}
        \item $A$-$AAA$-bimodule morphisms $\tau_1: M \otimes_0 T \to T \otimes_1 M$, and $\tau_2: M \otimes_0 T \to T \otimes_2 M$, and
        \item $A$-$AAA$-bimodule morphisms $\overline{\tau_1}: T \otimes_1 M \to M \otimes_0 T$, and $\overline{\tau_2}: T \otimes_2 M \to M \otimes_0 T$, represented by
    \end{itemize} 
\begin{equation*}
\tau_i = \pic[1.5]{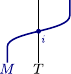}, \qquad
\overline{\tau_i} := \pic[1.5]{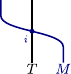}, \qquad i = 1,2 ~,
\end{equation*}
which satisfy the axioms  \eqrefT{1}--\eqrefT{7}
in Figure \ref{fig:wilsonlines_ids}.
    
\begin{figure}[t]
\captionsetup[subfigure]{labelformat=empty}
\centering
 \begin{subfigure}[b]{0.45\textwidth}
\centering
\pic[1.5]{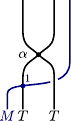}$=$\pic[1.5]{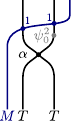}
\caption{}
\label{fig:T1}
\end{subfigure}\hspace{-2em}\raisebox{5.5em}{(T1)}\\
\begin{subfigure}[b]{0.45\textwidth}
\centering
\pic[1.5]{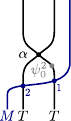}$=$\pic[1.5]{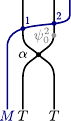}
\caption{}
\label{fig:T2}
\end{subfigure}\hspace{-2em}\raisebox{5.5em}{(T2)}
\begin{subfigure}[b]{0.45\textwidth}
\centering
\pic[1.5]{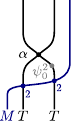}$=$\pic[1.5]{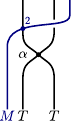}
\caption{}
\label{fig:T3}
\end{subfigure}\hspace{-2em}\raisebox{5.5em}{(T3)}\\
\begin{subfigure}[b]{0.35\textwidth}
\centering
\pic[1.5]{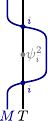}$=$\pic[1.5]{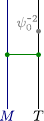}
\caption{}
\label{fig:T4}
\end{subfigure}\hspace{-1em}\raisebox{5.5em}{(T4)}
\begin{subfigure}[b]{0.35\textwidth}
\centering
\pic[1.5]{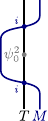}$=$\pic[1.5]{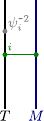}
\caption{}
\label{fig:T5}
\end{subfigure}\hspace{-1em}\raisebox{5.5em}{(T5)}\hspace{4em}\raisebox{5.5em}{$i=1,2$}\\
\begin{subfigure}[b]{0.35\textwidth}
\centering
\pic[1.5]{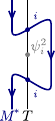}$=$\pic[1.5]{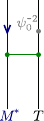}
\caption{}
\label{fig:T6}
\end{subfigure}\hspace{-1em}\raisebox{5.5em}{(T6)}
\begin{subfigure}[b]{0.35\textwidth}
\centering
\pic[1.5]{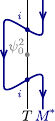}$=$\pic[1.5]{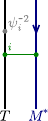}
\caption{}
\label{fig:T7}
\end{subfigure}\hspace{-1em}\raisebox{5.5em}{(T7)}\hspace{4em}\raisebox{5.5em}{$i=1,2$}
	\caption{Identities in the category of line defects.}
    \label{fig:wilsonlines_ids}
 \end{figure}

    \item \textit{Morphisms}: A morphism $f: (M, \tau_1^M, \tau_2^M, \overline{\tau_1}^M, \overline{\tau_2}^M) \to (N, \tau_1^N, \tau_2^N, \overline{\tau_1}^N, \overline{\tau_2}^N)$ is a bimodule morphism $f: M  \to N$ which commutes with the morphisms $\tau_i$, i.e. $\tau_i^N \circ(f \otimes_0 \id_T) = (\id_T \otimes_i f) \circ \tau_i^M$ 
    \begin{equation*}
\pic[1.5]{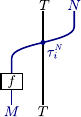} = \pic[1.5]{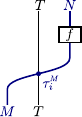}, \quad i=1,2 ~.
\end{equation*}
\end{itemize}
We will refer to the morphisms $\tau_i$ as \textit{T-crossings} and the morphisms $\overline{\tau_i}$ as their \textit{pseudo-inverses}.
\end{definition}

Next we review how to turn $\C_\A$ into a ribbon category \cite[Sec.\,3.2]{Mulevicius:2022}. 
For this we extend the notation for $\psi_i$ and $\omega_i$ to bimodules as follows:
\begin{equation*}
\pic[1.5]{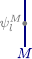} := \pic[1.5]{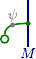}, \quad
\pic[1.5]{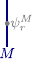} := \pic[1.5]{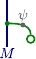}, \quad
\pic[1.5]{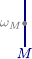} := \pic[1.5]{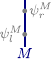}.
\end{equation*}
In practice, we will usually drop the extra index $M$ as it is clear from the context.

The monoidal, rigid, braided, and ribbon structures on $\C_\A$ are: 
\begin{itemize}
    \item \textit{Tensor Product}: Let $$(M, \tau_1^M, \tau_2^M, \overline{\tau_1}^M, \overline{\tau_2}^M) \otimes (N, \tau_1^N, \tau_2^N, \overline{\tau_1}^N, \overline{\tau_2}^N) := (M \otimes N, \tau_1^{M \otimes N}, \tau_2^{M \otimes N}, \overline{\tau_1}^{M \otimes N}, \overline{\tau_2}^{M \otimes N})$$ where $\tau_i^{M \otimes N}$ and $\overline{\tau_i}^{M \otimes N}$ are given by

    \begin{equation*}\label{eq:tau^MN_i}
\tau^{M\otimes N}_i            := \pic[1.5]{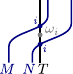}, \quad
\overline{\tau_i}^{M\otimes N} := \pic[1.5]{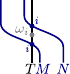}, \qquad i=1,2 ~;
\end{equation*}

    \item \textit{Monoidal Unit}: The monoidal unit $\1_{\C_\A}:= (A, \tau_1^A, \tau_2^A, \overline{\tau_1}^A, \overline{\tau_2}^A)$ with $\tau_i^{A}$ and $\overline{\tau_i}^{A}$ given by: 
    \begin{equation*}
\label{eq:unit_T-cross}
\pic[1.5]{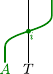} := \pic[1.5]{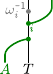}, \quad
\pic[1.5]{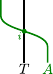} := \pic[1.5]{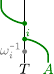}, \qquad i=1,2 ~;
\end{equation*}

    \item \textit{Associator and Unitors}: The associators and unitors of this tensor product are inherited from the category $_A \C_A$ of $A$-$A$-bimodules in $\C$;

    \item \textit{Duals}: For any object $(M, \tau_1^M, \tau_2^M, \overline{\tau_1}^M, \overline{\tau_2}^M) $ in $\C_\A$, take its dual in $\C_\A$ to be the dual bimodule $M^*$ with the T-crossings and pseudo-inverses given below:  
    \begin{equation*}
\pic[1.5]{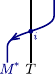}
:= \pic[1.5]{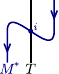}, \quad
\pic[1.5]{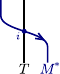}
:= \pic[1.5]{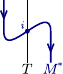} \qquad i=1,2
\end{equation*}
with evaluation and coevaluation maps given by
\begin{align*}
&[\ev_M: M^* \otimes_A M \to A] := \pic[1.5]{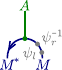},
&&[\coev_M: A \to M \otimes_A M^*] := \pic[1.5]{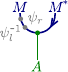}, \nonumber \\
\label{eq:evs_evts_in_D} 
&[\widetilde{ev}_M: M \otimes_A M^* \to A] := \pic[1.5]{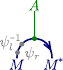},
&&[\widetilde{coev}_M: A \to M^* \otimes_A M] := \pic[1.5]{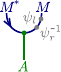};
\end{align*}

    \item \textit{Braiding}: The braiding of two objects $(M, \tau_1^M, \tau_2^M, \overline{\tau_1}^M, \overline{\tau_2}^M)$ and $(N, \tau_1^N, \tau_2^N, \overline{\tau_1}^N, \overline{\tau_2}^N)$ in $\C_\A$ is given by the isomorphism $c_{M,N}$: 
    \begin{equation*}
\label{eq:D_braiding}
c_{M,N} := \pic[1.5]{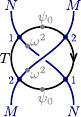} \cdot \phi
\ , \qquad
c^{-1}_{M,N} := \pic[1.5]{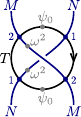} \cdot \phi
\ ;
\end{equation*}

    \item \textit{Ribbon Twist}: For $(M, \tau_1^M, \tau_2^M, \overline{\tau_1}^M, \overline{\tau_2}^M) \in \C_\A$, we define the twist $\theta_M$ using the braiding on $\C_\A$ in the typical way. 
\begin{equation*}
\label{eq:twist}
\theta_M 
:=
\pic[1.5]{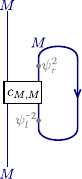} 
=
\pic[1.5]{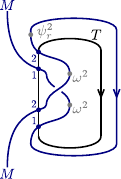} \cdot \phi.
\end{equation*}
\end{itemize}

\begin{proposition}
    $\C_\A$ with the above structure morphisms is an idempotent-complete additive ribbon category.
\end{proposition}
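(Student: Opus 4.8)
The plan is to build up the structure in the order additive, idempotent-complete, monoidal, rigid, braided, ribbon, at each stage reducing as much as possible to the already-understood structure on the bimodule category $_A\C_A$ and isolating the genuinely new content, which lives in the T-crossing data. First, the additive structure is inherited verbatim: a finite direct sum of objects of $\C_\A$ carries the direct sum of the underlying bimodules, with T-crossings and pseudo-inverses taken component-wise, and axioms \eqrefT{1}--\eqrefT{7} survive because they are additive in the T-crossing data. For idempotent-completeness, given an idempotent $e$ on $(M,\tau_i^M,\dots)$, its underlying bimodule idempotent splits in $_A\C_A$ with projection/embedding $p,\iota$ satisfying $p\circ\iota=\id$ and $\iota\circ p=e$; one then defines $\tau_i^{\mathrm{im}\,e}:=(\id_T\otimes_i p)\circ\tau_i^M\circ(\iota\otimes_0\id_T)$, and the axioms descend because $e$, being a morphism in $\C_\A$, intertwines the T-crossings of $M$.

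Second, I would verify that the tensor product, unit, and dual are genuinely objects of $\C_\A$, i.e.\ that the displayed T-crossings satisfy \eqrefT{1}--\eqrefT{7}. For the tensor product this amounts to sliding the two T-crossings of $M$ and $N$ past each other and invoking \eqrefT{1}--\eqrefT{7} for each factor together with the orbifold axioms \eqrefO{1}--\eqrefO{8} relating $\alpha,\overline\alpha,\psi$; for the unit $A$ one reduces the identities to the Frobenius and $\Delta$-separability relations of $A$ and again to \eqrefO{1}--\eqrefO{8}; for the dual $M^*$ one uses the pseudo-inverses $\overline{\tau_i}$ together with the duality morphisms of $\C$. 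In the same step I would check that the proposed $\ev_M,\coev_M,\widetilde{ev}_M,\widetilde{coev}_M$ are morphisms in $\C_\A$ and satisfy the zig-zag identities, relying on the $\Delta$-separable symmetric Frobenius structure of $A$ so that $\otimes_A$ is well-behaved.

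Third comes coherence. The associator and unitors are those of $_A\C_A$, so the pentagon and triangle hold automatically; the only thing to verify is that they are morphisms in $\C_\A$, i.e.\ that they commute with the tensor-product T-crossings, which is a direct check from the explicit form of $\tau_i^{M\otimes N}$. It then remains to show that $c_{M,N}$ and $\theta_M$ are morphisms in $\C_\A$ and satisfy the two hexagon identities and the ribbon compatibilities. I expect this to be the main obstacle. Unlike everything above, the braiding genuinely mixes the braiding of $\C$ with the T-crossing data and carries the normalisation factor $\phi$, so checking that $c_{M,N}$ intertwines T-crossings, and that both hexagons hold, requires a sustained string-diagram computation that repeatedly uses \eqrefO{1}--\eqrefO{8}---in particular the relations making $\alpha$ and $\overline\alpha$ mutual pseudo-inverses and the role of $\psi$---and it is precisely here that $\phi$ must be inserted so that the hexagons and the compatibility $\theta_{M\otimes N}=(\theta_M\otimes\theta_N)\circ c_{N,M}\circ c_{M,N}$ come out consistently. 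Once the braiding is under control, the twist, defined from it in the usual way, inherits naturality and the remaining ribbon axioms $\theta_{\1}=\id$ and $(\theta_M)^*=\theta_{M^*}$ largely formally.
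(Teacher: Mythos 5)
Your overall architecture -- inherit the additive structure and the associators from ${}_A\C_A$, split idempotents there and transport the T\-/crossings along the splitting, verify \eqrefT{1}--\eqrefT{7} for the tensor product, unit and duals using \eqrefO{1}--\eqrefO{8}, then do the sustained diagrammatic work for the braiding -- is sound, and it is essentially the strategy of the references the paper defers to: the paper itself gives no computations but cites \cite[Sec.\,3.2]{Mulevicius:2022} and \cite[Sec.\,4.2]{Carqueville:2021dbv}, notes that the braided part of the proof in \cite{Mulevicius:2022} is purely diagrammatic and survives in the present setting, and handles idempotent-completeness exactly by the splitting-and-transport argument you give (cf.\ \cite[Prop.\,3.13]{Mulevicius:2022}).

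The genuine gap is at the ribbon stage. You place the main difficulty in the hexagon identities and assert that, once the braiding is under control, the twist ``inherits naturality and the remaining ribbon axioms $\theta_{\1}=\id$ and $(\theta_M)^*=\theta_{M^*}$ largely formally.'' This is precisely backwards in the present generality. Defining the twist from the braiding by a partial trace yields at best a \emph{balanced} structure; the compatibility with duality $(\theta_M)^*=\theta_{M^*}$ is an independent, nontrivial condition, and it is exactly the point where the known proof fails without further input: the paper explicitly warns that the proof in \cite{Mulevicius:2022} that $\C_\A$ is ribbon \emph{relies on semisimplicity} (which is unavailable here -- $\C$ is only additive and idempotent-complete, not even linear over a field), and that one must instead argue along the lines of \cite[Sec.\,4.2]{Carqueville:2021dbv}, where the ribbon property is extracted from a defect-TFT 3-category rather than obtained formally from the braiding. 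So your plan as stated delivers a braided (balanced) monoidal category, but the step you dismiss as formal is the one that requires either a dedicated string-diagram proof of $(\theta_M)^*=\theta_{M^*}$ using the dual T\-/crossings and \eqrefO{1}--\eqrefO{8}, or the TFT argument of \cite{Carqueville:2021dbv}.
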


This is shown in \cite[Sec.\,3.2]{Mulevicius:2022} and \cite[Sec.\,4.2]{Carqueville:2021dbv}. The setting in \cite{Mulevicius:2022} is different (there, $\C$ is a modular fusion category over an algebraically closed field), but the proof given there that $\C_\A$ is braided just use the diagrammatic rules and still works in the present setting (the proof that $\C_\A$ is ribbon given there, however, relies on semisimplicity). That $\C_\A$ is ribbon follows along the same lines as in \cite[Sec.\,4.2]{Carqueville:2021dbv} (where instead the starting point is a certain 3-category obtained from a defect TFT). Idempotent-completeness can be checked using the same argument as in the proof of \cite[Prop.\,3.13]{Mulevicius:2022}.

\section{Orbifold data from \texorpdfstring{\boldmath $G$}{G}-crossed ribbon categories}\label{sec:G-crossed-to-orb}

Given a ribbon $G$-crossed category $\hat{\B}$, in \cite[Sec.\,5]{Carqueville:2018sld} a construction of an orbifold datum $\A$ in the trivial component $\B\coloneqq \B_e$ of $\hat{\B}$ is given. Namely, for each component of the grading on $\hat{\B}$, we choose a simple object $m_g \in \B_g$ such that $m_e = \1$ and we set 
$$\dm{g}\coloneqq \dim m_g.$$ 
We need to assume that
\begin{itemize}
    \item for each $g\in G$ there is an invertible element $\sqrt{d_g} \in \End(\1)$ which squares to $d_g$,\footnote{It is possible to adapt the conventions such that no square roots of $d_g$ are needed in the construction. We will not pursue this here and instead stick to the present conventions to match the notation in \cite{Carqueville:2018sld,Mulevicius:2022}. \label{fn:square-root}}
    and
    \item the element $|G| \cdot \id \in \End(\1)$ is invertible.
\end{itemize}

In the string diagrams that follow, we simply label the objects $m_g$ by $g$. We construct an orbifold datum $\A=(A,T,\alpha,\psi,\phi)$ in $\B$ with:
\begin{enumerate}
    \item[(i)]
    the $\Delta$-separable symmetric Frobenius algebra $A$ in $\B = \B_e$ defined as
    
$$A \coloneqq \bigoplus_{g\,\in\,G} A_g, \qquad A_g \coloneqq m_g^* \otimes m_g,$$

with (co)multiplication and (co)unit  $\mu=\bigoplus_g \mu_g$, $\eta=\bigoplus_g\eta_g$, $\Delta=\bigoplus_g \Delta_g$, and $\varepsilon=\bigoplus_g \varepsilon_g$, where the components are given by 
\begin{equation*}
\mu_g\coloneqq
\tikzzbox{%
\begin{tikzpicture}[very thick,scale=1,color=green!50!black, baseline=.9cm]
\draw[line width=0pt] 
(3,0) node[line width=0pt] (D) {{\scriptsize$g^*$}}
(2,0) node[line width=0pt] (s) {{\scriptsize$g\vphantom{g^*}$}}; 
\draw[redirectedgreen] (D) .. controls +(0,1) and +(0,1) .. (s);
\draw[line width=0pt] 
(3.45,0) node[line width=0pt] (re) {{\scriptsize$g\vphantom{g^*}$}}
(1.55,0) node[line width=0pt] (li) {{\scriptsize$g^*$}}; 
\draw[line width=0pt] 
(2.7,2) node[line width=0pt] (ore) {{\scriptsize$g\vphantom{g^*}$}}
(2.3,2) node[line width=0pt] (oli) {{\scriptsize$g^*$}}; 
\draw (li) .. controls +(0,0.75) and +(0,-0.25) .. (2.3,1.25);
\draw (2.3,1.25) -- (oli);
\draw (re) .. controls +(0,0.75) and +(0,-0.25) .. (2.7,1.25);
\draw (2.7,1.25) -- (ore);
\end{tikzpicture}
}%
\!\! , \quad \eta_g\coloneqq
\tikzzbox{%
\begin{tikzpicture}[very thick,scale=1,color=green!50!black, baseline=-.4cm,rotate=180]
\draw[line width=0pt] 
(3,0) node[line width=0pt] (D) {{\scriptsize$g^*$}}
(2,0) node[line width=0pt] (s) {{\scriptsize$g\vphantom{g^*}$}}; 
\draw[redirectedgreen] (s) .. controls +(0,1) and +(0,1)  .. (D);
\end{tikzpicture}
}%
\!\!, \quad 
\Delta_g\coloneqq\frac{1}{\dm{g}}\cdot\!\!
\tikzzbox{
\begin{tikzpicture}[very thick,scale=1,color=green!50!black, baseline=-0.9cm, rotate=180]
\draw[line width=0pt] 
(3,0) node[line width=0pt] (D) {{\scriptsize${g}\vphantom{g^*}$}}
(2,0) node[line width=0pt] (s) {{\scriptsize$g^*$}}; 
\draw[redirectedgreen] (D) .. controls +(0,1) and +(0,1) .. (s);
\draw[line width=0pt] 
(3.45,0) node[line width=0pt] (re) {{\scriptsize$g^*$}}
(1.55,0) node[line width=0pt] (li) {{\scriptsize${g}\vphantom{g^*}$}}; 
\draw[line width=0pt] 
(2.7,2) node[line width=0pt] (ore) {{\scriptsize$g^*$}}
(2.3,2) node[line width=0pt] (oli) {{\scriptsize$g\vphantom{g^*}$}}; 
\draw (li) .. controls +(0,0.75) and +(0,-0.25) .. (2.3,1.25);
\draw (2.3,1.25) -- (oli);
\draw (re) .. controls +(0,0.75) and +(0,-0.25) .. (2.7,1.25);
\draw (2.7,1.25) -- (ore);
\end{tikzpicture}
}%
\!\!, \quad
\varepsilon_g\coloneqq \dm{g}\cdot\!\!
\tikzzbox{
\begin{tikzpicture}[very thick,scale=1,color=green!50!black, baseline=.4cm]
\draw[line width=0pt] 
(3,0) node[line width=0pt] (D) {{\scriptsize$g\vphantom{g^*}$}}
(2,0) node[line width=0pt] (s) {{\scriptsize$g^*$}}; 
\draw[redirectedgreen] (s) .. controls +(0,1) and +(0,1) .. (D);
\end{tikzpicture}
}%
\!\!,
\end{equation*}
\item[(ii)]
the $A$-$(A\otimes A)$-bimodule $T$ defined as
$$T \coloneqq \bigoplus_{g,h\,\in\,G} T_{g,h}, \qquad T_{g,h} \coloneqq m_{gh}^* \otimes m_g \otimes m_h,$$
with actions
\begin{align*}
\begin{tikzpicture}[very thick,scale=0.75,color=blue!50!black, baseline]
\draw (0,-1) node[below] (X) {{\scriptsize$T_{g,h}$}};
\draw[color=green!50!black] (-0.8,-1) node[below] (A1) {{\scriptsize$A_{gh}$}};
\draw (0,1) node[right] (Xu) {};
\draw[color=green!50!black] (A1) .. controls +(0,0.5) and +(-0.5,-0.5) .. (0,0.3);
\draw (0,-1) -- (0,1); 
\fill[color=blue!50!black] (0,0.3) circle (2.9pt) node (meet2) {};
\end{tikzpicture} 
&:=
\pic[3.5]{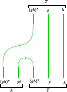}
, 
\\[1em]
\begin{tikzpicture}[very thick,scale=0.75,color=blue!50!black, baseline]
\draw (0,-1) node[below] (X) {{\scriptsize$T_{g,h}$}};
\draw[color=green!50!black] (0.8,-1) node[below] (A1) {{\scriptsize$A_h$}};
\draw (0,1) node[right] (Xu) {};
\draw[color=green!50!black] (A1) .. controls +(0,0.5) and +(0.5,-0.5) .. (0,0.3);
\draw (0,-1) -- (0,1); 
\fill[color=blue!50!black] (0,0.3) circle (2.9pt) node (meet2) {};
\fill[color=black] (0.2,0.5) circle (0pt) node (meet) {{\tiny$2$}};
\end{tikzpicture} 
&:= 
\pic[3.5]{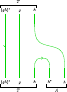},  
\hspace{1cm}
\begin{tikzpicture}[very thick,scale=0.75,color=blue!50!black, baseline]
\draw (0,-1) node[below] (X) {{\scriptsize$T_{g,h}$}};
\draw[color=green!50!black] (0.8,-1) node[below] (A1) {{\scriptsize$A_g$}};
\draw (0,1) node[right] (Xu) {};
\draw[color=green!50!black] (A1) .. controls +(0,0.5) and +(0.5,-0.5) .. (0,0.3);
\draw (0,-1) -- (0,1); 
\fill[color=blue!50!black] (0,0.3) circle (2.9pt) node (meet2) {};
\fill[color=black] (0.2,0.5) circle (0pt) node (meet) {{\tiny$1$}};
\end{tikzpicture} 
:=
\pic[3.5]{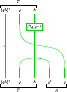} 
.
\end{align*}

\item[(iii)]
the $A$-$AAA$-bimodule morphisms $\alpha\colon T\otimes_2 T \to T\otimes_1 T$ and $\overline{\alpha}\colon T\otimes_1 T \to T\otimes_2 T$ defined as
$$\alpha\coloneqq \bigoplus_{g,h,k\,\in\,G} \alpha_{g,h,k}, \qquad \alpha_{g,h,k}\colon T_{g,hk} \otimes T_{h,k} \to T_{gh, k} \otimes T_{g,h},$$
$$\overline{\alpha}\coloneqq \bigoplus_{g,h,k\,\in\,G} \overline{\alpha}_{g,h,k}, \qquad \overline{\alpha}_{g,h,k}\colon T_{gh,k} \otimes T_{g,h} \to T_{g,hk} \otimes T_{h,k},$$
where the components are given by
\[\alpha_{g,h,k} ~=\hspace{-1em}\includegraphics[scale=4,valign=c]{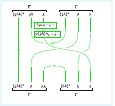} \hspace{-2em}, \quad
\overline{\alpha}_{g,h,k} ~=\hspace{-1em}\includegraphics[scale=4,valign=c]{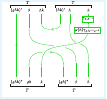}
\]

\item[(iv)]
$$\psi\vert_{A_g} \coloneqq \frac{1}{d_g^{1/2}}\id_{A_g},$$ 
\item[(v)]
$$\phi \coloneqq \frac{1}{|G|}.$$
\end{enumerate}
As an example of how to read the labelling of string diagrams in $G$-crossed ribbon categories, let us look at the diagram for $\alpha_{g,h,k}$ in more detail. When the strand starting at $m_k$ crosses over $m_h$, $m_g$ via $\tilde c_{-,m_k}$, and over $m_{gh}$ via $c_{m_k,m_{gh}}$, its label changes from $m_k$ first to $\varphi(h^{-1})(m_k)$ and then to $\varphi(g^{-1})\varphi(h^{-1})(m_k)$ and $\varphi(gh)\varphi(g^{-1})\varphi(h^{-1})(m_k)$. The morphism $\varphi(gh)(\gamma_{g^{-1},h^{-1}})$ maps this object to $\varphi(gh)\varphi(h^{-1}g^{-1})(m_k)$, and  $\gamma_{gh,h^{-1}g^{-1}}$ to $\varphi(h^{-1}g^{-1}gh)(m_k)= \varphi(e)(m_k)$. Since we assumed $\varphi(e)=\id$, the outgoing part of the strand is indeed labelled $m_k$.

\begin{theorem}[{\cite[Thm.\,5.1\,\&\,Rem.\,5.6]{Carqueville:2018sld}}]
\label{thm:Gcrossed-to-orb}
$\A := (A, T, \alpha, \overline{\alpha}, \psi, \phi)$ defined in 
(i)--(v) is an orbifold datum in $\B_e$.
\end{theorem}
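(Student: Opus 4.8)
The plan is to verify directly that the tuple $\A$ satisfies every clause of Definition~\ref{def:orb-datum}, working throughout with the grading decompositions $A = \bigoplus_g A_g$ and $T = \bigoplus_{g,h} T_{g,h}$, so that each identity reduces to a family of identities indexed by group elements. Since each $m_g$ is simple with invertible quantum dimension $\dm{g}$, the object $A_g = m_g^* \otimes m_g$ carries the canonical Frobenius structure, and the normalisations with $\tfrac{1}{\dm{g}}$ in $\Delta_g$ and $\dm{g}$ in $\varepsilon_g$ are exactly those making $\mu\circ\Delta = \id_A$ hold and the Frobenius pairing symmetric; I would first record these as standard consequences of the snake identities together with $\ev_{m_g}\circ\coev_{m_g} = \dm{g}\cdot\id_{\1}$. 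That $A$ is then a single algebra (the direct sum of the $A_g$, with $\mu$ vanishing on off-diagonal summands), and that $T$ is an $A$-$(A\otimes A)$-bimodule, are checked by the same duality manipulations: the three actions act through (co)evaluation on the $m_{gh}^*$, $m_g$ and $m_h$ legs respectively, and associativity, unitality, and commutativity of the left action with the two right actions follow from the snake identities and the grade-by-grade form of $\mu$.

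Next I would show that $\alpha$ and $\overline{\alpha}$ are well-defined $A$-$AAA$-bimodule morphisms. Well-definedness on the relative tensor products $T\otimes_2 T$ and $T\otimes_1 T$ amounts to checking that the defining diagrams descend along the idempotents $P_{T,T}$, which follows from naturality of the $G$-crossed braiding and from the fact that the braidings in the diagrams slide past the $A$-actions being contracted. The intertwining property with respect to the outer $A$- and $AAA$-actions is again a braiding-naturality computation; here the label bookkeeping illustrated in the paragraph preceding the theorem statement—tracking how a strand starting at $m_k$ acquires factors $\varphi(g^{-1})\varphi(h^{-1})(\cdots)$ and is returned to $m_k$ by the compositors $\gamma$—must be carried through carefully so that source and target components genuinely agree.

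The substance of the proof is the verification of the axioms \eqrefO{1}--\eqrefO{8}, which I would organise as follows. The invertibility relations (which I expect to be \eqrefO{2} and \eqrefO{3}), stating that $\alpha$ and $\overline{\alpha}$ are mutually inverse on the relevant relative tensor products, follow from invertibility of the $G$-crossed braiding together with simplicity of the $m_g$, which forces the residual scalar loops to evaluate to the expected dimension factors. The pentagon-type identity \eqrefO{1} is the heart of the matter: it should reduce to the associativity of the tensor product in $\hat\B$ and the two twisted hexagon identities, with the compositor coherence ensuring that the $G$-labels on all strands match on both sides. The twist- and $\psi$-compatibility axioms \eqrefO{4}--\eqrefO{7} I would deduce from the $G$-ribbon twist axioms (compatibility with the braiding, with the action, and with duals) combined with $\psi|_{A_g} = \dm{g}^{-1/2}\id_{A_g}$. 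Finally \eqrefO{8} is a normalisation: closing up the relevant diagram produces a sum over $G$ of dimension factors which the choices $\psi|_{A_g} = \dm{g}^{-1/2}\id_{A_g}$ and $\phi = 1/|G|$ are designed to absorb, so that \eqrefO{8} holds with $\phi^{-1} = |G|$.

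I expect the main obstacle to be \eqrefO{1}. Unlike the other axioms it cannot be reduced to a single local move: it mixes the hexagon identities, the compositors $\gamma_{g,h}$, and the tensorators $\mu_g$, so one must expand both sides into the underlying morphisms in $\hat\B$ and check that, after re-associating the $G$-action functors, the two composites coincide. The delicate point is essentially combinatorial—ensuring that every $\varphi(\cdot)$-label and every $\gamma$ inserted to correct it carries the correct group element—and it is precisely the consistency guaranteed by $\varphi$ being a monoidal functor on $\underline{G^{\op}}$ (so that the $\gamma$ satisfy their own coherence) that makes the two sides agree. Since this is \cite[Thm.\,5.1]{Carqueville:2018sld}, an alternative is to invoke that reference after matching conventions; the verification sketched above is the content that reference establishes.
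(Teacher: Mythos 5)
The paper itself contains no proof of this theorem: it is imported wholesale from \cite[Thm.\,5.1\,\&\,Rem.\,5.6]{Carqueville:2018sld}, so the citation you offer in your final paragraph as an ``alternative'' is in fact exactly the paper's approach, and your direct-verification outline is a reconstruction of what that reference establishes. As a reconstruction it is organised correctly -- grade-by-grade reduction via $A=\bigoplus_g A_g$, $T=\bigoplus_{g,h}T_{g,h}$, snake identities and loop values for the Frobenius and bimodule structure, naturality and the twisted hexagons for \eqrefO{1}, the ribbon axioms for the $\psi$-conditions, and the sum over $G$ producing $\phi^{-1}=|G|$ in \eqrefO{8} -- and these are indeed the ingredients used in the cited proof. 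Two caveats are worth recording. First, simplicity of the $m_g$ is not what makes the ``residual loops'' into dimension factors: those loops are genuinely closed, so they equal $\dm{g}$ by the trace axioms of a ribbon category alone. This matters because the paper's setting is additive and idempotent-complete but not assumed linear over a field or semisimple (Theorem~\ref{thm:main} only demands an object of each grade with invertible quantum dimension), so any step of the form ``simple $\Rightarrow$ endomorphisms are scalars'' would not be available; fortunately none of the checks actually needs it. Second, the convention-matching you mention is a real step, not a formality: this paper follows \cite[Def.\,2.2]{Mulevicius:2022} rather than \cite[Def.\,3.4]{Carqueville:2018sld} (e.g.\ \eqrefO{8} involves $\phi^{-1}$, not $\phi^{-2}$), and Remark 5.6 of \cite{Carqueville:2018sld} is what extends the statement beyond the fusion setting, so both must be invoked for the citation route to cover the theorem as stated here.
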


\section{Equivalence}\label{sec:equiv}

Recall that $\B = \B_e$ denotes the neutral component of $\hat\B$.
In this section, we prove the main theorem of this paper:

\begin{theorem}\label{thm:main}
Let $G$ be a finite group and $\hat\B$ an idempotent-complete additive ribbon $G$-crossed category such that${}^{\,\ref{fn:square-root}}$
\begin{itemize}
    \item $|G|$ is invertible in $\End_{\B}(\1)$, and
    \item for each $g \in G$ there is $m_g \in \B_g$ and an invertible $\sqrt{d_g} \in \End_{\B}(\1)$ such that $\dim m_g = (\sqrt{d_g})^2$.
\end{itemize}
Let $\A$ the orbifold datum 
in $\B$ obtained from $\hat{\B}$ via Theorem~\ref{thm:Gcrossed-to-orb}.
Then there is an explicit ribbon equivalence $E \colon \hat{\B}^G \to \B_\A$.
\end{theorem}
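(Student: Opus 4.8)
The plan is to construct the functor $E$ explicitly and then verify, in turn, that it is a well-defined functor, that it is monoidal, that it is an equivalence, and finally that it respects the braiding and twist. Throughout I would invoke Remark~\ref{rem:G-strict} to assume the $G$-action is strict, so that the tensorators $\mu_g$ and compositors $\gamma_{g,h}$ are identities; this reduces the equivariance datum of an object $\mathbf X = (X,(\eta_g^X))$ to isomorphisms $\eta_g^X \colon \varphi(g)(X)\to X$ with $\eta_g^X\circ\varphi(g)(\eta_h^X)=\eta_{hg}^X$, whose components read $\eta_g^X|_{X_h}\colon \varphi(g)(X_h)\isomto X_{g^{-1}hg}$.

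First I would define $E$ on objects. The underlying $A$-$A$-bimodule $M$ of $E(\mathbf X)$ is assembled from the graded pieces $X_h$ of $X$ and the chosen simples $m_g$ by means of the componentwise Morita equivalence $A_g\text{-mod}\simeq \B_g$ (given by $Y\mapsto m_g^*\otimes Y$), with left and right $A$-actions built from the (co)evaluations of the $m_g$. The T-crossings $\tau_1,\tau_2$ are then defined so as to braid $X$ past the factors $m_g,m_h$ that make up $T_{g,h}$ using the $G$-crossed braiding $c^{\hat\B}$, followed by the equivariant isomorphisms $\eta_g^X$ which absorb the $\varphi$-action the braiding produces; this is the exact bimodule analogue of the formula $c^{\hat\B^G}_{X,Y_h}=(\id\otimes\eta^X_h)\circ c^{\hat\B}_{X,Y_h}$ for the braiding on $\hat\B^G$. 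The pseudo-inverses $\overline{\tau_1},\overline{\tau_2}$ use $\tilde c$ and $(\eta_g^X)^{-1}$. On morphisms, an equivariant $f\colon\mathbf X\to\mathbf Y$ — one commuting with the $\eta$'s — induces a bimodule map commuting with the T-crossings, hence a morphism in $\B_\A$, and functoriality is then immediate.

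Next I would check that $(M,\tau_i,\overline{\tau_i})$ satisfies \eqrefT{1}--\eqrefT{7} and that the monoidal structure is respected. The T-axioms should follow by translating them back into $\hat\B$: the compatibility of the $\tau_i$ with $\alpha,\overline\alpha$ (which encode associativity of the tensor product of $\hat\B$) reduces to the twisted hexagon identities together with the cocycle relation $\eta_g\circ\varphi(g)(\eta_h)=\eta_{hg}$, while the relations involving $\psi_i,\omega_i$ reduce to the $\theta$-$c$ and $\theta$-duality compatibilities of $\hat\B$ and the normalisation $\psi|_{A_g}=d_g^{-1/2}\id$. For monoidality I would produce the coherence isomorphism $E(\mathbf X)\otimes_{\B_\A} E(\mathbf Y)\cong E(\mathbf X\otimes\mathbf Y)$ from the relative-tensor-product projections, checking that the T-crossing of the product matches the equivariant structure $\eta^X_g\otimes\eta^Y_g$ on $X\otimes Y$, and identify $\1_{\B_\A}=A$ with $E(\1)$; the associator and unitor compatibilities are inherited from ${}_A\C_A$.

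Finally I would prove that $E$ is an equivalence and is compatible with the braided and ribbon structure. Fullness, faithfulness, and essential surjectivity all follow from $A\text{-mod}\simeq\hat\B$: a morphism in $\B_\A$ is a bimodule map commuting with the $\tau_i$, which under this equivalence is exactly an equivariant morphism, and any object of $\B_\A$ can be reconstructed as an equivariant object by specialising $T$ to its components $T_{g,h}$ and reading off $\eta$ from the T-crossings. For the braiding and twist I would compare $c^{\B_\A}_{M,N}$ (which carries the scalar $\phi=\tfrac{1}{|G|}$) and $\theta^{\B_\A}$ with $c^{\hat\B^G}$ and $\theta^{\hat\B^G}_{Y_h}=\eta^{Y_h}_h\circ\theta^{\hat\B}_{Y_h}$. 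I expect this last comparison to be the main obstacle: it is a lengthy string-diagram computation in the $G$-crossed calculus in which one must track the $\varphi$-action introduced at each crossing, and the delicate point is that the single factor $\phi=\tfrac{1}{|G|}$ in the $\B_\A$-braiding must cancel against the sum over the $|G|$ graded components of $T$ (together with the $\sqrt{d_g}$ normalisations entering through $\psi$ and the coevaluations), so that the net effect reproduces exactly $(\id\otimes\eta^X_h)\circ c^{\hat\B}_{X,Y_h}$ on each component.
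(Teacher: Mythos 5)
Your proposal follows essentially the same route as the paper's proof: strictify the $G$-action, send $\mathbf X=(X,\eta^X)$ to the bimodule $\bigoplus_{g,h}m_g^*\otimes X_{gh^{-1}}\otimes m_h$ with T-crossings built from the $G$-crossed braiding and the $\eta$'s, verify \eqrefT{1}--\eqrefT{7} and monoidality with trivial coherence data, prove the equivalence via the Morita-type decomposition of $A$-$A$-bimodules, and match braiding and twist with exactly the cancellation you identify (the scalar $\phi=1/|G|$ against the sum over graded components of $T$, together with the $\sqrt{d_g}$ normalisations). The only cosmetic deviation is that in the paper's conventions $\tau_1$ involves only the braiding while the equivariant structure enters through $\tau_2$ (and $\overline{\tau_2}$), but this does not affect the correctness of your outline.
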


We will define the functor $E\colon \hat{\B}^G \to \B_\A$ and prove its properties in four steps:
\begin{enumerate}
    \item Denote by ${}_A \B_A$ the category of $A$-$A$-bimodules in $\B$. We define a faithful monoidal functor $J\colon \hat{\B} \to {}_A\B_A$, and we characterise its image in the Hom-spaces of ${}_A \B_A$.
    \item The functor $J$ can be lifted to a functor $E \colon \hat{\B}^G \to \B_\A$ in the sense that the following diagram commutes (on the nose):
\[
\begin{tikzcd}
    \hat{\B}^G \arrow[d,"\text{forget}"'] \arrow[r,"E"] & \B_\A \arrow[d,"\text{forget}"] \\
    \hat\B \arrow[r,"J"]  & {}_A \B_A 
\end{tikzcd}
\]
    \item We check that $E$ is a ribbon functor.
    \item We check that $E$ is an equivalence.
\end{enumerate}

\subsection[The monoidal functor \texorpdfstring{$J$}{J}]{The monoidal functor \texorpdfstring{\boldmath $J$}{J}}\label{sec:functorJ}

Recall from Section~\ref{sec:G-crossed-to-orb} that the algebra $A$ in the orbifold datum $\A$ decomposes as $A = \bigoplus_{g \in G} A_g$. All $A_g$ are objects in the neutral component $\B = \B_e$. This is not to be confused with the decomposition $X = \bigoplus_{g \in G} X_g$ of an object in $\hat{\B}$ into homogeneous components $X_g \in \B_g$.

Below, we will often use a special case of the relative tensor product over $A$, namely when $M = X \otimes m_g$ and $N = m_h^* \otimes Y$, for objects $X,Y \in \hat{\B}$. The $A$-action is as in the description of $T$ in Section~\ref{sec:G-crossed-to-orb} above. For example, only the component $A_g$ has a non-zero right action on $M$, and it is given by
\[
\rho_M = \big[ M \otimes A_g = X \otimes m_g \otimes m_g^* \otimes m_g \xrightarrow{\id \otimes \widetilde{\ev}_{m_g} \otimes \id} X \otimes m_g = M \big] \ .
\]
We have:

\begin{lemma}\label{lem:tensor-A}
    For $M = X \otimes m_g$ and $N = m_h^* \otimes Y$, the image of $P_{M,N}$ is $0$ if $g \neq h$ and $X \otimes Y$ if $g=h$, that is, $M \otimes_A N = \delta_{g,h} \, X \otimes Y$.
\end{lemma}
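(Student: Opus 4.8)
The plan is to compute the separability idempotent $P_{M,N}$ directly and identify its image, exploiting the block structure of $A$. Recall that $M \otimes_A N$ is by definition the image of
\[
P_{M,N} = (\rho_M \otimes \rho_N) \circ \big(\id_M \otimes (\Delta \circ \eta) \otimes \id_N\big),
\]
and that, by the construction in Section~\ref{sec:G-crossed-to-orb}, $A = \bigoplus_{k \in G} A_k$ is as an algebra the direct sum of the $\Delta$-separable symmetric Frobenius algebras $A_k = m_k^* \otimes m_k = \underline{\End}(m_k)$. In particular $\mu = \bigoplus_k \mu_k$, $\eta = \bigoplus_k \eta_k$ and $\Delta = \bigoplus_k \Delta_k$ are block-diagonal, with $\Delta_k \colon A_k \to A_k \otimes A_k$, so that $\Delta \circ \eta = \bigoplus_k \Delta_k \circ \eta_k$ takes values in the diagonal summands $\bigoplus_k A_k \otimes A_k \subseteq A \otimes A$.

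First I would treat the case $g \neq h$. The right action $\rho_M \colon M \otimes A \to M$ on $M = X \otimes m_g$ vanishes on $M \otimes A_k$ for $k \neq g$, and likewise the left action $\rho_N \colon A \otimes N \to N$ on $N = m_h^* \otimes Y$ vanishes on $A_k \otimes N$ for $k \neq h$. Combined with the diagonal support of $\Delta \circ \eta$, the composite $P_{M,N}$ only receives a contribution from the $A_g \otimes A_h$ component of $\Delta \circ \eta$, which is zero unless $g = h$. Hence $P_{M,N} = 0$ and $M \otimes_A N = \im P_{M,N} = 0$ when $g \neq h$.

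Next I would treat $g = h$. Here $M \otimes N = X \otimes m_g \otimes m_g^* \otimes Y$, and $P_{M,N}$ is built from $\Delta_g \circ \eta_g$ together with the two module actions, all of which are expressed through (co)evaluations of $m_g$ and carry the normalisation $\tfrac{1}{d_g}$ coming from $\Delta_g$. The plan is to exhibit the projection and embedding explicitly,
\[
\pi = \id_X \otimes \widetilde{\ev}_{m_g} \otimes \id_Y \colon X \otimes m_g \otimes m_g^* \otimes Y \to X \otimes Y,
\qquad
\iota = \tfrac{1}{d_g}\,\id_X \otimes \coev_{m_g} \otimes \id_Y,
\]
where the choice of (co)evaluation matches the action convention $\rho_M = \id \otimes \widetilde{\ev}_{m_g} \otimes \id$. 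Since $\widetilde{\ev}_{m_g} \circ \coev_{m_g} = d_g \cdot \id_{\1}$ one gets $\pi \circ \iota = \id_{X \otimes Y}$, and a short diagrammatic computation using $\Delta$-separability ($\mu \circ \Delta = \id_A$) and the explicit form of $\rho_M, \rho_N$ shows $\iota \circ \pi = P_{M,N}$. Thus $M \otimes_A N = \im P_{M,N} \cong X \otimes Y$. Equivalently, one may phrase this Morita-theoretically: $A_g = \underline{\End}(m_g)$ is Morita-trivial with $m_g \otimes_{A_g} m_g^* \cong \1$, and tensoring with $X$ on the left and $Y$ on the right yields the claim.

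Everything except the $g = h$ computation is formal bookkeeping with the block decomposition of $A$. The only genuine work, and hence the main obstacle, is matching the (co)evaluation conventions together with the $\tfrac{1}{d_g}$ normalisation in $\Delta_g$ and in the actions so that $\pi \circ \iota = \id_{X \otimes Y}$ and $\iota \circ \pi = P_{M,N}$ hold on the nose, giving the asserted equality $M \otimes_A N = \delta_{g,h}\, X \otimes Y$ rather than merely an isomorphism up to a scalar; this is routine once the conventions are fixed, but it is where care is required.
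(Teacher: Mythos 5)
Your proposal is correct and takes essentially the same route as the paper, which simply declares the proof straightforward and records the same projector $P_{M,N} = \frac{\delta_{g,h}}{\dm{g}}\,(\id \otimes \coev_{m_g} \otimes \id)\circ(\id \otimes \widetilde{\ev}_{m_g} \otimes \id)$ together with exactly the splitting $\pi_{M,N} = \delta_{g,h}\,(\id \otimes \widetilde{\ev}_{m_g} \otimes \id)$, $\iota_{M,N} = \frac{1}{\dm{g}}\,(\id \otimes \coev_{m_g} \otimes \id)$ that you exhibit. Your block-diagonal argument for $g \neq h$ and the verification $\pi \circ \iota = \id_{X\otimes Y}$ are precisely the omitted ``straightforward'' details; the only quibble is that the identity $\iota \circ \pi = P_{M,N}$ follows from the zig-zag identities and the explicit forms of $\rho_M$, $\rho_N$, $\Delta_g\circ\eta_g$ alone, without invoking $\Delta$-separability.
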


The proof is straightforward, and we just state the form of the projector $P_{M,N}$ from Section~\ref{sec:orbdata} in this case,
\[
P_{M,N} = \frac{\delta_{g,h}}{\dm{g}} \Big[ X \otimes m_g \otimes m_g^* \otimes Y 
\xrightarrow{\id \otimes \widetilde{\ev}_{m_g} \otimes \id} X \otimes Y \xrightarrow{\id \otimes \coev_{m_g} \otimes \id} X \otimes m_g \otimes m_g^* \otimes Y \Big] \ .
\]
For the projection and embedding map we choose the convention to include the factor $d_g^{-1}$ with the embedding,
\begin{align*}
\pi_{M,N} 
&= \delta_{g,h} \Big[ X \otimes m_g \otimes m_g^* \otimes Y 
\xrightarrow{\id \otimes \widetilde{\ev}_{m_g} \otimes \id} X \otimes Y \Big] \ ,
\\
\iota_{M,N}
&=
\frac{1}{\dm{g}} \Big[  X \otimes Y \xrightarrow{\id \otimes \coev_{m_g} \otimes \id} X \otimes m_g \otimes m_g^* \otimes Y \Big]
\ .
\end{align*}

Let ${}_A \B_A$ be the category of $A$-$A$-bimodules in $\B$ and define the functor 
\begin{align*}
J\colon \hat{\B} &\to {}_A\B_A,\\
X=\bigoplus_{g\in G} X_g &\mapsto \bigoplus_{g,h\,\in\,G} m_g^* \otimes X_{gh^{-1}} \otimes m_h,\\
\left[\bigoplus_{g\in G} X_g \xrightarrow{\underset{g\,\in\,G}{\bigoplus} f_g} \bigoplus_{g\in G} Y_g\right] &\mapsto \bigoplus_{g,h \in G} \id_{m_g^*} \otimes f_{gh^{-1}} \otimes \id_{m_h},
\end{align*}
where $X_{gh^{-1}}$ is the summand of $X$ in the $gh^{-1}$ component.
The left and right action of $A = \bigoplus_{g \in G} A_g$  on $J(X)$ is given by 
\[
\includegraphics[scale=4.50,valign=c]{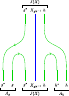}
\qquad .
\]
In more detail, the left action by $A_g$ and right action by $A_h$ projects to the $m_g^* \otimes X_{gh^{-1}} \otimes m_h$ summand of $J(X)$ and is zero on the other summands. Note that, as in Section~\ref{sec:G-crossed-to-orb}, in string diagrams we abbreviate object labels $m_g \equiv g$ to make the diagram less crowded.

\begin{lemma}
The functor $J$ defined above is a faithful monoidal functor with monoidal structure $J^2$ and $J^0$ given by identity morphisms.
\end{lemma}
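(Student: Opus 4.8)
The plan is to verify, in order, that $J$ is well-defined and functorial, that it is faithful, and finally that the claimed identity morphisms $J^0$ and $J^2$ assemble into a monoidal structure. The only substantial external input is Lemma~\ref{lem:tensor-A}, describing $\otimes_A$ on objects of the form $X \otimes m_g$ and $m_h^* \otimes Y$.

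First I would check that each $J(X)$ is genuinely an $A$-$A$-bimodule in $\B$ and that $J$ respects composition and identities. By construction the summand $m_g^* \otimes X_{gh^{-1}} \otimes m_h$ has grade $g^{-1}\cdot gh^{-1}\cdot h = e$, so it lies in $\B = \B_e$; the left action of $A_g = m_g^* \otimes m_g$ contracts against the outer $m_g^*$, the right action of $A_h$ contracts against the outer $m_h$, and all other homogeneous components act as zero. Since left and right act on disjoint tensor legs they commute automatically, and the remaining module axioms reduce to the zigzag identities for $m_g, m_h$ together with the $\Delta$-separable symmetric Frobenius structure on $A$ fixed in Theorem~\ref{thm:Gcrossed-to-orb}. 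Functoriality is immediate from the component-wise formula $J(\bigoplus_g f_g) = \bigoplus_{g,h}\id_{m_g^*} \otimes f_{gh^{-1}} \otimes \id_{m_h}$.

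For faithfulness, suppose $J(f) = 0$ for $f = \bigoplus_g f_g$. Restricting to the summand with $h = e$ (where $m_e = \1$) yields $\id_{m_g^*} \otimes f_g = 0$ for every $g$; since $m_g$ is dualizable, tensoring with $m_g^*$ is faithful (one recovers $f_g$ by the (co)evaluations of $m_g$), whence $f_g = 0$. For the monoidal structure I would first compute $J(\1)$: as $\1$ sits in the neutral component, only $gh^{-1} = e$ contributes, giving $J(\1) = \bigoplus_g m_g^* \otimes m_g = \bigoplus_g A_g = A$, with the induced bimodule structure equal to the regular one, so I set $J^0 := \id_A$. For the tensorator I would apply Lemma~\ref{lem:tensor-A} summand by summand: writing $J(X) = \bigoplus_{g,h}(m_g^* \otimes X_{gh^{-1}}) \otimes m_h$ and $J(Y) = \bigoplus_{h',k} m_{h'}^* \otimes (Y_{h'k^{-1}} \otimes m_k)$, the relative tensor product forces $h = h'$ and contracts the adjacent $m_h$, $m_h^*$, giving
\[
J(X) \otimes_A J(Y) \iso \bigoplus_{g,h,k} m_g^* \otimes X_{gh^{-1}} \otimes Y_{hk^{-1}} \otimes m_k .
\]
Multiplicativity of the grading gives $(X \otimes Y)_{gk^{-1}} = \bigoplus_{h \in G} X_{gh^{-1}} \otimes Y_{hk^{-1}}$, so $J(X \otimes Y)$ is literally the same object; I would then take $J^2_{X,Y}$ to be this identification, i.e.\ the identity, and check it is an $A$-$A$-bimodule isomorphism (both sides carry their actions on the outer $m_g^*$, $m_k$) and natural in $X,Y$ (both sides act as $\bigoplus_h f_{gh^{-1}} \otimes f'_{hk^{-1}}$).

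The step deserving the most care — and the main obstacle — is confirming that these identity morphisms satisfy the monoidal-functor coherence axioms \emph{inside} ${}_A\B_A$, where $\otimes_A$ is associative only up to the canonical associator built from the projection/embedding maps, which in the convention of Lemma~\ref{lem:tensor-A} carry the scalars $d_g^{-1}$. Concretely I would verify that on the triple product the associator of ${}_A\B_A$ restricts to the identity of $\bigoplus_{g,h,l,k} m_g^* \otimes X_{gh^{-1}} \otimes Y_{hl^{-1}} \otimes Z_{lk^{-1}} \otimes m_k$, so that the associativity pentagon for $J$ collapses to a tautology; the scalar factors cancel precisely because $\pi_{M,N}\circ\iota_{M,N} = \id$ on each relative tensor product. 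The unit coherences are handled identically using $J^0 = \id_A$. Granting this bookkeeping, every structure morphism of $J$ is an identity and $J$ is a faithful monoidal functor.
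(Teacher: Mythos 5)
Your proposal is correct and follows essentially the same route as the paper: faithfulness from injectivity of $\id_{m_g^*}\otimes(-)\otimes\id_{m_h}$, the computation of $J(X)\otimes_A J(Y)$ via Lemma~\ref{lem:tensor-A} followed by reindexing to identify it with $J(X\otimes Y)$, the identification $J(\1)=A$ from $\1_{gh^{-1}}=\delta_{g,h}\1$, and coherence from the triviality of the $\otimes_A$-associator for $\Delta$-separable Frobenius algebras (which the paper delegates to \cite[App.\,A]{Barmeier:2007idm}). The only quibble is terminological: the coherence axiom for the tensorator $J^2$ that you verify is the hexagon for a monoidal functor, not the pentagon, but your argument for it is exactly the paper's.
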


\begin{proof}
It is immediate that $J$ is faithful as $\id\otimes - \otimes \id$ is injective on $\Hom$-spaces. 
For the monoidal structure, we have
\begin{align*}
    J(X) \otimes_A J(Y) &= \bigoplus_{g,h,k,l\,\in\,G} \Bigl(m_g^* \otimes X_{gk^{-1}} \otimes m_k\Bigr) \otimes_A  \Bigl(m_l^* \otimes  Y_{lh^{-1}} \otimes m_h\Bigr)
    \\
    &= \bigoplus_{g,h,k\,\in\,G} m_g^* \otimes X_{gk^{-1}} \otimes  Y_{kh^{-1}} \otimes m_h 
     \ ,
\end{align*}
where the second equality follows from Lemma~\ref{lem:tensor-A}.
Setting $u = hk^{-1}$, one observes that
$$
\bigoplus_{k\,\in\,G} X_{gk^{-1}} \otimes Y_{kh^{-1}} = \bigoplus_{u\,\in\,G} X_{gh^{-1}u} \otimes Y_{u^{-1}} = (X \otimes Y)_{gh^{-1}} \ ,
$$
so that altogether
$$J(X) \otimes_A J(Y) = \bigoplus_{g,h\,\in\,G} m_g^* \otimes (X \otimes Y)_{gh^{-1}} \otimes m_h = J(X \otimes Y) \ .$$
This is  consistent with the choice $J^2_{X,Y} = \id$, and also with the hexagon as both, $J(X) \otimes_A (J(Y) \otimes_A J(Z))$ and $(J(X) \otimes_A J(Y)) \otimes_A J(Z)$ are equal to $\bigoplus_{g,h\,\in\,G} m_g^* \otimes (X \otimes Y \otimes Z)_{gh^{-1}} \otimes m_h$, and the associator is trivial
(see e.g.\ \cite[App.\,A]{Barmeier:2007idm} where the associator for $\otimes_A$ in the case of $\Delta$-separable Frobenius algebras is discussed in more detail).

For the tensor unit $\1 \in \hat{\B}$ one finds 
$$
J(\1) 
= 
\bigoplus_{g,h\,\in\,G} m_g^* \otimes \1_{gh^{-1}} \otimes m_h
= 
\bigoplus_{g\,\in\,G} m_g^*  \otimes m_g
= 
A \ ,
$$
where we used that $\1_{gh^{-1}} = \delta_{g,h}\, \1$. It is immediate that $J^0=\id$ solves the triangle conditions.
\end{proof}

The next lemma describes the image of $J : \Hom_{\hat{\B}}(X,Y) \to \Hom_{{}_A \B_A}(J(X),J(Y))$.

\begin{lemma}\label{lem:image-J-Hom}
    The image of the functor $J$ defined above on $\Hom$-spaces is
    \[
    \im\left(J_{|\Hom_{\hat{\B}}(X,Y)}\right)=\left\{\begin{array}{l}
        \underbrace{\left[\underbrace{\bigoplus_{g,h\,\in\,G} m_g^* \otimes X_{gh^{-1}} \otimes m_h}_{J(X)}\xrightarrow{\underset{g,h,k,l\,\in\,G}{\bigoplus} f_{g,h,k,l}}\underbrace{\bigoplus_{k,l\,\in\,G} m_k^* \otimes Y_{kl^{-1}} \otimes m_l}_{J(Y)}\right]}_{\in\,\Hom_{{}_A\B_A}(J(X),J(Y))}\\\hline
        \forall g,h,k,l\in G\colon\left\{\begin{array}{rl}
            (i)&g\not= k\Rightarrow f_{g,h,k,l}=0,\\
            (ii)&h\not= l\Rightarrow f_{g,h,k,l}=0,\\
            (iii)&f_{g,h,g,h}=\id_{m_g^*}\otimes f_{g,h}\otimes \id_{m_h},\\
            (iv)&f_{g,e}=f_{gh,h}
            \end{array}\right.
        \end{array}\right\} \ ,
    \]
where  $f_{g,h,k,l} : m_g^* \otimes X_{gh^{-1}} \otimes m_h \to m_k^* \otimes Y_{kl^{-1}} \otimes m_l$ runs over all morphisms in $\hat{\B}$ satisfying (i)--(iv), and $f_{g,h}$ runs over morphisms $X_{gh^{-1}} \to Y_{gh^{-1}}$ in $\hat{\B}$. 
\end{lemma}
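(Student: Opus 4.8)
The plan is to prove the two inclusions $\im\big(J_{|\Hom}\big)\subseteq S$ and $S\subseteq \im\big(J_{|\Hom}\big)$, where $S$ is the set on the right-hand side of the claimed equality. Along the way I would record the stronger structural fact that conditions (i)--(iii) are satisfied by \emph{every} morphism in $\Hom_{{}_A\B_A}(J(X),J(Y))$, so that (iv) is the only genuine constraint carving out the image of $J$. Concretely, I would first describe all of $\Hom_{{}_A\B_A}(J(X),J(Y))$ (this gives (i)--(iii)), and then intersect with the image of $J$ on morphisms (this gives (iv)).

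First I would analyse an arbitrary bimodule morphism $F\colon J(X)\to J(Y)$. Since $J(X)=\bigoplus_{g,h}m_g^*\otimes X_{gh^{-1}}\otimes m_h$ and similarly for $J(Y)$, forgetting the bimodule structure presents $F$ as a matrix $F=\bigoplus_{g,h,k,l}f_{g,h,k,l}$ of morphisms in $\B$ (note each source and target lies in $\B_e$, so these are honest morphisms of $\B$). The algebra $A=\bigoplus_g A_g$ decomposes as a direct sum of the mutually orthogonal subalgebras $A_g=m_g^*\otimes m_g$; acting by the unit of $A_g$ on the left and of $A_h$ on the right gives mutually orthogonal idempotents on $J(X)$ whose images are exactly the summands $m_g^*\otimes X_{gh^{-1}}\otimes m_h$ (recall that $A_g$ acts nontrivially only on the summands with first leg $m_g^*$, and $A_h$ only on those with last leg $m_h$). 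As $F$ commutes with these idempotents it is block diagonal, i.e.\ $f_{g,h,k,l}=0$ unless $g=k$ and $h=l$. This establishes (i) and (ii).

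The substantive step is identifying the diagonal blocks, and this is where the invertibility of $d_g=\dim m_g$ enters. Each $A_g=\underline{\End}(m_g)$ is a $\Delta$-separable symmetric Frobenius algebra (indeed $\mu_g\circ\Delta_g=\id$ holds precisely because of the normalisation $d_g^{-1}$ in $\Delta_g$) and is Morita equivalent to $\1$ with Morita bimodules $m_g$ and $m_g^*$. Consequently I expect the assignment $f\mapsto \id_{m_g^*}\otimes f\otimes\id_{m_h}$ to be an isomorphism from $\Hom_{\hat\B}(X_{gh^{-1}},Y_{gh^{-1}})$ onto the space of $A_g$-$A_h$-bimodule morphisms $m_g^*\otimes X_{gh^{-1}}\otimes m_h\to m_g^*\otimes Y_{gh^{-1}}\otimes m_h$. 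Injectivity is immediate (as in the faithfulness of $J$); for surjectivity one bends the two outer legs of a bimodule morphism $\beta$ shut against $m_g$ and $m_h^*$ using $(\co)$evaluations normalised by $d_g^{-1}$ and $d_h^{-1}$, producing $f\colon X_{gh^{-1}}\to Y_{gh^{-1}}$, and then uses the $A_g$- and $A_h$-module property of $\beta$ together with $\Delta$-separability to verify $\id_{m_g^*}\otimes f\otimes\id_{m_h}=\beta$. This yields (iii) and shows that every bimodule morphism has the form $\bigoplus_{g,h}\id_{m_g^*}\otimes f_{g,h}\otimes\id_{m_h}$. I expect this Morita/bending computation to be the main obstacle; the rest is bookkeeping with the grading.

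Finally I would pin down the image through condition (iv). By the definition of $J$ on morphisms, $J\big(\bigoplus_c\tilde f_c\big)=\bigoplus_{g,h}\id_{m_g^*}\otimes\tilde f_{gh^{-1}}\otimes\id_{m_h}$, so its diagonal blocks $f_{g,h}=\tilde f_{gh^{-1}}$ depend only on $gh^{-1}$; in particular $f_{g,e}=\tilde f_g=f_{gh,h}$, which is (iv), giving $\im\big(J_{|\Hom}\big)\subseteq S$. Conversely, for $F\in S$, conditions (i)--(iii) present $F$ as $\bigoplus_{g,h}\id_{m_g^*}\otimes f_{g,h}\otimes\id_{m_h}$ with $f_{g,h}\colon X_{gh^{-1}}\to Y_{gh^{-1}}$, and applying (iv) with group elements $gh^{-1}$ and $h$ gives $f_{g,h}=f_{gh^{-1},e}$. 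Setting $\tilde f_c\coloneqq f_{c,e}\colon X_c\to Y_c$ then yields $F=J\big(\bigoplus_c\tilde f_c\big)$, so $F\in\im\big(J_{|\Hom}\big)$. This proves $S\subseteq\im\big(J_{|\Hom}\big)$ and completes the argument.
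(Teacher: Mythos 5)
Your proof is correct, and its two inclusions are handled exactly as in the paper's own proof: ``$\subset$'' because the blocks of $J(\hat f)$ are $\id_{m_g^*}\otimes \hat f_{gh^{-1}}\otimes\id_{m_h}$ and hence depend only on $gh^{-1}$, and ``$\supset$'' by setting $\hat f_g := f_{g,e}$ and using (iv) to see $f_{g,h}=f_{gh^{-1},e}=\hat f_{gh^{-1}}$. Where you differ is that you prove a strictly stronger statement along the way: that \emph{every} $A$-$A$-bimodule morphism $J(X)\to J(Y)$ automatically satisfies (i)--(iii), via the orthogonal idempotents coming from the units of the $A_g$ and a Morita/bending argument for the diagonal blocks. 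This extra work is not needed for the lemma as stated: the set on the right-hand side is \emph{defined} by conditions (i)--(iv), so (i)--(iii) are hypotheses one gets for free when proving ``$\supset$'', and the paper's proof accordingly never invokes any Morita-type fullness, only the definition of $J$ on morphisms (plus the easy check that morphisms of the stated form are bimodule maps). Your stronger structural fact is nonetheless true and is precisely the paper's later Lemma~\ref{lem:AgAh-bimodhom}(2), proved there by the same Morita-context argument you sketch, and used where it is actually indispensable: in showing that $E$ is full and essentially surjective. So your route buys a complete description of $\Hom_{{}_A\B_A}(J(X),J(Y))$ and makes transparent that (iv) is the only genuine constraint cutting out the image, at the cost of front-loading the hardest ingredient (which your write-up only sketches, though the sketch is sound); the paper's route keeps this lemma purely formal and defers the Morita argument to the point where it is needed.
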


\begin{proof}
It is easy to check that the morphisms on the right hand side in the statement are indeed $A$-$A$-bimodule morphisms. 

We first show the inclusion ``$\subset$''.
Let $X\overset{\hat{f}}{\to}Y$ be a morphism in $\Hom_{\hat{\B}}(X,Y)$. Then
    \[J(\hat{f})=\bigoplus_{g,h \in G} \id_{m_g^*} \otimes \hat{f}_{gh^{-1}} \otimes \id_{m_h}=\left[\bigoplus_{g,h\,\in\,G} m_g^* \otimes X_{gh^{-1}} \otimes m_h\xrightarrow{\underset{g,h,k,l\,\in\,G}{\bigoplus} f_{g,h,k,l}}\bigoplus_{k,l\,\in\,G} m_k^* \otimes Y_{kl^{-1}} \otimes m_l\right]\]
    immediately implies that conditions $(i)$ -- $(iii)$ hold with $f_{g,h}\coloneqq \hat{f}_{gh^{-1}}$. Further, $f_{g,e}=\hat{f}_g=f_{gh,h}$ for all $g,h\in G$, and so $(iv)$ holds as well.

To show the inclusion ``$\supset$'', let
    \[f=\left[\bigoplus_{g,h\,\in\,G} m_g^* \otimes X_{gh^{-1}} \otimes m_h\xrightarrow{\underset{g,h,k,l\,\in\,G}{\bigoplus} f_{g,h,k,l}}\bigoplus_{k,l\,\in\,G} m_k^* \otimes Y_{kl^{-1}} \otimes m_l\right]\]
    satisfy conditions $(i)$ -- $(iv)$, i.e.
    \[f=\left[\bigoplus_{g,h\,\in\,G} m_g^* \otimes X_{gh^{-1}} \otimes m_h\xrightarrow{\underset{g,h\,\in\,G}{\bigoplus} \id_{m_g^*}\,\otimes\,f_{g,h}\,\otimes\,\id_{m_h}}\bigoplus_{g,h\,\in\,G} m_g^* \otimes Y_{gh^{-1}} \otimes m_h\right]\ , \] 
    and $f_{g,h}$ satisfies condition $(iv)$.
    Define $[X\overset{\hat{f}}{\to} Y]\in \Hom_{\hat{\B}}(X,Y)$ via $[X_g\overset{\hat{f}_g}{\to} Y_g]\coloneqq f_{g,e}$ for all $g\in G$. Then 
    \[\hat{f}_{gh^{-1}}=f_{gh^{-1},e}\overset{(iv)}{=}f_{gh^{-1}h,h}=f_{g,h}\]
    and hence $J(\hat{f})=f$.
\end{proof}

\subsection[The functor \texorpdfstring{$E$}{E}]{The functor \texorpdfstring{\boldmath $E$}{E}}\label{sec:functorE}

We will lift the functor $J$ to a functor $E$ such that the following diagram of functors commutes (on the nose):
\[
\begin{tikzcd}
    \hat{\B}^G \arrow[d,"\text{forget}"'] \arrow[r,"E"] & \B_\A \arrow[d,"\text{forget}"] \\
    \hat\B \arrow[r,"J"]  & {}_A \B_A 
\end{tikzcd}
\]
To construct $E$ it is useful to investigate how $J$ interacts with the actions on $T$. Hence, we take a closer look at the $A_0$-$(A_1\otimes A_2)$-bimodules obtained by tensoring $T$ and $J(X)$. Here, $A_{0,1,2}$ and also $B$ below all stand for $A$, the different notation is intended to show which copy of $A$ acts where:
\begin{align*}
    J(X) \otimes_0 T &= {\vphantom{\Big)}}_{A_0}\Bigl({}_{A_0}J(X)_B\otimes_B {}_BT_{A_1 A_2}\Bigr)_{A_1 A_2},\\
    T\otimes_1 J(X) &= {\vphantom{\Big)}}_{A_0}\Bigl({}_{A_0}T_{B A_2}\otimes_B {}_BJ(X)_{A_1}\Bigr)_{A_1 A_2},\\
    T\otimes_2 J(X) &= {\vphantom{\Big)}}_{A_0}\Bigl({}_{A_0}T_{A_1 B}\otimes_B {}_BJ(X)_{A_2}\Bigr)_{A_1 A_2}.
\end{align*}

We spell out the components, always taking the indices in advance in such a way that the outer $A_0$-, $A_1$- and $A_2$-action includes the projection to the action of $A_g \subset A_0$, $A_h \subset A_1$, and $A_k \subset A_2$. 
Tensoring with $J(X)$ from the left gives
\begin{align*}
J(X) \otimes_0 T &= \bigoplus_{g,h,k,l\,\in\,G} \Bigl[m_g^* \otimes X_{gl^{-1}} \otimes m_l\Bigr] \otimes_A \Bigl[m_{hk}^* \otimes m_h \otimes m_k\Bigr]
&\overset{(*)}= \bigoplus_{g,h,k\,\in\,G} m_g^* \otimes X_{g(hk)^{-1}} \otimes m_h \otimes m_k.
\end{align*}
In $(*)$ we used Lemma~\ref{lem:tensor-A} to compute the relative tensor product $\otimes_A$, which is non-zero only if $l = hk$.

Tensoring with $J(X)$ from the right over $A_1$ gives
\begin{align*}
T \otimes_1 J(X) &= \bigoplus_{g,h,k,l\,\in\,G} \Bigl(m_g^* \otimes m_{gk^{-1}} \otimes m_k\Bigr) \otimes_A \Bigl(m_l^* \otimes X_{lh^{-1}} \otimes m_h\Bigr)
\\
  &\overset{(1)}\cong \bigoplus_{g,h,k,l\,\in\,G} \Bigl(m_g^* \otimes \varphi\big( (gk^{-1} )^{-1}\big)(m_k)  \otimes m_{gk^{-1}} \Bigr) \otimes_A \Bigl(m_l^* \otimes X_{lh^{-1}} \otimes m_h\Bigr)
\\
&\overset{(2)}= \bigoplus_{g,h,k\,\in\,G} m_g^* \otimes 
\varphi\big( k g^{-1} \big)(m_k) 
\otimes X_{gk^{-1}h^{-1}} \otimes m_h.
\end{align*}
In step (1) we use the $G$-braiding $\tilde c_{m_{gk^{-1}},m_k}$ to exchange the two objects (the $G$-braiding $c_{m_{gk^{-1}},m_k}$ would not be compatible with the $A$-action). Step (2) is Lemma~\ref{lem:tensor-A}.

Note that here the external $A_0$ action is defined on $m_g^*$, the $A_2$ action is defined on $\varphi((gk^{-1})^{-1})(m_k)$, and the $A_1$ action is defined on $m_h$. We would prefer to explicitly have the order $A_0$-$A_1$-$A_2$, so we use the isomorphism
$$\phi_1 \colon T \otimes_1 J(X) \isomto \bigoplus_{g,h,k\,\in\,G} m_g^* \otimes X_{gk^{-1}h^{-1}} \otimes m_h \otimes m_k$$
given by 
\[
\phi_1 ~=~
\sum_{g,h,k \in G}
\includegraphics[scale=4.50,valign=c]{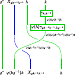}
\qquad .
\]
Tensoring with $J(X)$ from the right over $A_2$ results in
\begin{align*}
T \otimes_2 J(X) &= \bigoplus_{g,h,k,l\,\in\,G} \Bigl(m_g^* \otimes m_h \otimes m_{h^{-1}g}\Bigr) \otimes_A \Bigl(m_l^* \otimes X_{lk^{-1}} \otimes m_k\Bigr)
\\
&= \bigoplus_{g,h,k\,\in\,G} m_g^* \otimes m_h \otimes X_{h^{-1}gk^{-1}} \otimes m_k.
\end{align*}
Likewise, to reorder the tensor factors here, we use the isomorphism 
$$\phi_2\colon T \otimes_2 J(X) \isomto \bigoplus_{g,h,k\,\in\,G} m_g^* \otimes \varphi(h^{-1})\big(X_{h^{-1}gk^{-1}}\big) \otimes m_h \otimes m_k$$
given by 
\[
\phi_2 ~=~\sum_{g,h,k \in G}
\includegraphics[scale=4.50,valign=c]{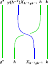}
\qquad .
\]
In total, we have
\begin{align*}
J(X) \otimes_0 T &= \bigoplus_{g,h,k\,\in\,G} m_g^* \otimes X_{gk^{-1}h^{-1}} \otimes m_h \otimes m_k,\\
T \otimes_1 J(X) \overset{\phi_1}&{\iso} \bigoplus_{g,h,k\,\in\,G} m_g^* \otimes X_{gk^{-1}h^{-1}} \otimes m_h \otimes m_k,\\
T \otimes_2 J(X) \overset{\phi_2}&{\iso} \bigoplus_{g,h,k\,\in\,G} m_g^* \otimes 
\varphi(h^{-1})\big(X_{h^{-1}gk^{-1}}\big)
 \otimes m_h \otimes m_k.
\end{align*}

The isomorphism $\phi_1$ and $\phi_2$ can also be used to give the morphisms $\tau_{1,2}$, $\overline\tau_{1,2}$ which form part of the data of an object in $\B_\A$: 
{\allowdisplaybreaks
    \begin{align*}
        \tau_1 &\coloneqq        
        \left[\underbrace{\bigoplus_{g,h,k\,\in\,G} m_g^* \otimes X_{gk^{-1}h^{-1}} \otimes m_h \otimes m_k}_{J(X)\,\otimes_0\,T} 
        \xrightarrow{\tau_1^{g,h,k}} T \otimes_2 J(X)\right], 
        \\
        \tau_2 &\coloneqq        
        \left[\underbrace{\bigoplus_{g,h,k\,\in\,G} m_g^* \otimes X_{gk^{-1}h^{-1}} \otimes m_h \otimes m_k}_{J(X)\,\otimes_0\,T} 
        \xrightarrow{\tau_2^{g,h,k}} T \otimes_2 J(X)\right], 
        \\
        \overline{\tau_1} &\coloneqq 
        \left[\underbrace{\bigoplus_{g,h,k\,\in\,G} m_g^* \otimes X_{gk^{-1}h^{-1}} \otimes m_h \otimes m_k}_{T \otimes_1 J(X)} 
        \xrightarrow{\overline{\tau}_1^{g,h,k}} 
        J(X) \otimes_0 T \right],
        \\
        \overline{\tau_2} &\coloneqq \left[\underbrace{\bigoplus_{g,h,k\,\in\,G} m_g^* \otimes \varphi\left(h^{-1}\right)\left(X_{h^{-1}gk^{-1}}\right) \otimes m_h \otimes m_k}_{T \otimes_2 J(X)} 
        \xrightarrow{\overline{\tau}_2^{g,h,k}} 
        J(X)\,\otimes_0\,T \right].
    \end{align*}}
where
    \begin{align*}
        \tau_1^{g,h,k} &= \dm{gk^{-1}} \Bigg(\frac{\dm{hk}}{\dm{h}}\Bigg)^{\!\frac{1}{2}} \phi_1^{-1} \ ,
        &
        \tau_2^{g,h,k} &=
        \dm{h^{-1}g} \Bigg(\frac{\dm{hk}}{\dm{h}}\Bigg)^{\!\frac{1}{2}}
        \phi_2^{-1}\,\circ\,\left[\bigoplus_{g,h,k\,\in\,G} \id_{m_g^*}\,\otimes\,\left(\eta^X_{h^{-1}}\right)^{-1}\,\otimes\,\id_{m_h\otimes m_k}\right] \ ,
        \\
        \overline{\tau}_1^{g,h,k} &=
        \dm{hk} \Bigg(\frac{\dm{h}}{\dm{hk}}\Bigg)^{\!\frac{1}{2}} \phi_1 \ ,
        &
        \overline{\tau}_2^{g,h,k} &= \dm{hk}
        \Bigg(\frac{\dm{k}}{\dm{hk}}\Bigg)^{\!\frac{1}{2}}
        \left[\bigoplus_{g,h,k\,\in\,G} \id_{m_g^*}\,\otimes\,\eta^X_{h^{-1}}\,\otimes\,\id_{m_h\otimes m_k}\right]\,\circ\,\phi_2 \ .
    \end{align*}
If we include explicitly the projection and embedding for the relative tensor products, as string diagrams, $\tau_1$ and $\tau_2$ are given by (recall from below Lemma~\ref{lem:tensor-A} that there is an extra dimension factor included in the embedding map):
{\allowdisplaybreaks
\begin{align*}    
\iota_{T,J(X)} \circ \tau_1 \circ \pi_{J(X),T} &~=~
\sum_{g,h,k}
\Bigg(\frac{\dm{hk}}{\dm{h}}\Bigg)^{\!\frac{1}{2}}
~~
\includegraphics[scale=4.25,valign=c]{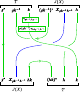}
~~, 
\\
\iota_{T,J(X)} \circ \tau_2 \circ \pi_{J(X),T} &~=~
\sum_{g,h,k}
\Bigg(\frac{\dm{hk}}{\dm{k}}\Bigg)^{\!\frac{1}{2}}
~~
\includegraphics[scale=4.25,valign=c]{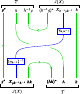}
\ .
\end{align*}}The morphism $\gamma_{h,h^{-1}}$ appears when computing the inverse of $\tilde c$ which is part of $\phi_2$, cf.\ the discussion of $G$-braidings in Section~\ref{sec:crossed_equiv}.
The string diagrams for $\overline\tau_1$ and $\overline\tau_2$ are similar.

The definition of the functor $E$ above still includes the coherence isomorphisms of the $G$-action. To simplify the proofs, from this point on we will make use of Remark~\ref{rem:G-strict} and assume the $G$-action to be strict.

\begin{proposition}
 The assignment
    \begin{align*}
        E \colon \hat{\B}^G &\to \B_\A, \\
        \left(X, \eta^X\right) &\mapsto \big(
    J(X),\tau_1,\tau_2,\overline{\tau_1},\overline{\tau_2}\big),\\
        \left[ \left(X, \eta^X\right) \overset{f}{\to} \left(Y, \eta^Y\right)\right] &\mapsto J(f).
    \end{align*}
indeed defines a functor $\hat{\B}^G \to \B_\A$.
\end{proposition}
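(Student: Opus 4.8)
The plan is to verify two things, after which functoriality of $E$ is automatic: that $E$ sends each object $(X,\eta^X)$ to a genuine object of $\B_\A$, and that it sends each morphism to a genuine morphism of $\B_\A$. Since $E$ acts as $J$ on morphisms and $J$ is already a (faithful, monoidal) functor, compatibility of $E$ with identities and composition needs no separate argument; the content lies entirely in checking that the target tuples and morphisms satisfy the defining conditions of $\B_\A$ from Definition~\ref{def:cat-of-wilson}. At this point I would also invoke Remark~\ref{rem:G-strict} and assume the $G$-action strict, so that the compositors $\gamma$ and tensorators $\mu$ are identities; this removes most coherence terms and is what makes the remaining computations tractable.

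For the object part, I would first confirm that the stated $\tau_i$ and $\overline{\tau_i}$ are $A$-$AAA$-bimodule morphisms between the objects identified above. The reordering isomorphisms $\phi_1,\phi_2$ are bimodule isomorphisms by construction, being built from the $G$-braidings $c,\tilde c$ chosen precisely to be compatible with the $A$-actions (cf.\ steps $(1)$--$(2)$ preceding their definition), and the scalar dimension prefactors are harmless. The only point requiring care is that the maps built from $\eta^X_{h^{-1}}$ respect the bimodule structure, which follows from the grading-shift property $\eta^X_g|_{\varphi(g)(X_h)}\colon\varphi(g)(X_h)\isomto X_{g^{-1}hg}$ together with the explicit form of the actions on $m_g,m_h,m_k$.

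Next I would check the axioms \eqrefT{1}--\eqrefT{7}. The pseudo-inverse relations among these are the easy ones: in composing $\tau_i$ with $\overline{\tau_i}$ the prefactors are designed to telescope against the dimension factors carried by the embedding maps of the relative tensor product (see below Lemma~\ref{lem:tensor-A}), while the factors $\eta^X_{h^{-1}}$ and $(\eta^X_{h^{-1}})^{-1}$ occurring in $\tau_2,\overline{\tau_2}$ cancel by invertibility of $\eta$. The remaining axioms — those relating the T-crossings to the orbifold morphisms $\alpha,\overline\alpha$ and to $\psi$ — are the computational heart of the argument and the main obstacle. Here one substitutes the explicit $\alpha_{g,h,k}$ (built from $c,\tilde c$ and the compositors $\gamma$) together with the explicit $\tau_i$, and reduces both sides using the twisted hexagon identities and the compatibility of the $G$-braiding with the $G$-action, together with the equivariance coherence $\eta^X_{hg}\circ\gamma_{g,h}=\eta^X_g\circ\varphi(g)(\eta^X_h)$ of the definition of $\hat\B^G$. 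With strictness in force, what remains after the coherence terms drop out is a bookkeeping of dimension factors and a direct application of the hexagon and equivariance squares.

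Finally, for a morphism $f\colon(X,\eta^X)\to(Y,\eta^Y)$ I would show that $J(f)$ commutes with the T-crossings, i.e.\ $\tau_i^{J(Y)}\circ(J(f)\otimes_0\id_T)=(\id_T\otimes_i J(f))\circ\tau_i^{J(X)}$. For $\tau_1$, which involves no $\eta$, this is just naturality of $\phi_1$ and of the $G$-braiding. For $\tau_2$, which carries a factor $(\eta^X_{h^{-1}})^{-1}$, this is precisely the $G$-equivariance of $f$, namely the defining square $f\circ\eta^X_g=\eta^Y_g\circ\varphi(g)(f)$, which lets one slide $J(f)$ past the equivariant structure maps. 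Together with the functoriality of $J$, this completes the verification that $E$ is a functor.
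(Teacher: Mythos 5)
Your proposal is correct and matches the paper's proof in both structure and substance: the paper likewise reduces everything to checking that $E(X,\eta^X)$ satisfies \eqrefT{1}--\eqrefT{7} and that $J(f)$ commutes with the $T$-crossings (functoriality being inherited from $J$), working with a strict $G$-action via Remark~\ref{rem:G-strict} and using exactly the ingredients you name — naturality and hexagon identities of the $G$-braiding, the equivariance coherence of $\eta^X$ for the axioms composing two $\tau_2$'s, cancellation of dimension/loop factors against the prefactors, and $G$-equivariance of $f$ for the morphism condition on $\tau_2$. The paper simply carries out two representative checks explicitly (\eqrefT{3} and \eqrefT{6} for $i=2$, Figures~\ref{fig:T3check} and \ref{fig:T6check}) and notes the rest follow similarly, which is the same level of completeness as your plan.
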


\begin{proof}
    Fix objects $(X,\eta^X), (Y, \eta^Y)$ in  $\hat{\B}^G$ and a morphism $f \in \Hom_{\hat{\B}^G}((X,\eta^X), (Y, \eta^Y))$. To show that $E$ is well\--defined, it suffices to observe that the defining axioms \eqrefT{1}--\eqrefT{7} of $\B_\A$ are satisfied by $E(X, \eta^X)$ and that $J(f)$ commutes with $T$-crossings. We give the details for axiom \eqrefT{3} as well as axiom \eqrefT{6} for $i=2$; the others follow similarly. 

\begin{figure}[p!]
    \centering
\begin{align*}
&\frac{\delta}{\dm{h^{-1}g}} \hspace{-1em}
\includegraphics[scale=3.00,valign=c]{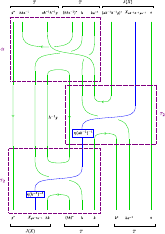}    
~=~\delta~
\includegraphics[scale=3.00,valign=c]{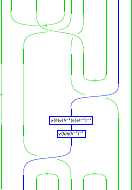}
\\
&=\delta~
\includegraphics[scale=3.00,valign=c]{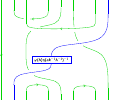}    
~=~\delta~
\includegraphics[scale=3.00,valign=c]{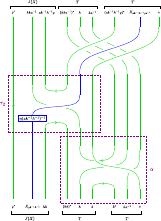}
\end{align*}
    \caption{Check that condition \eqrefT{3} holds. 
The factor $\delta$ is given by $\delta = (\dm{hk}/\dm{k})^{\frac12} (\dm{k}/\dm{a})^{\frac12} = (\dm{hk}/\dm{a})^{\frac12}$. 
}
    \label{fig:T3check}
\end{figure}

The calculation for \eqrefT{3} is given in Figure~\ref{fig:T3check}. The dimension factors arise from the $\psi_0^2$ in condition \eqrefT{3} and from the prefactor of $\tau_2$ (with projection/embedding).
The first equality there is a consequence of the naturality of the $G$-braiding of $X_{gk^{-1}h^{-1}}$ and $m_g$, the fact 
that we are now assuming the $G$-action to be strict (hence $\id= \varphi(h)\varphi(h^{-1})$), and the fact that the loop on $m_{h^{-1}g}$ can be replaced by the factor $\dim(m_{h^{-1}g})$. The second equality follows from the definition of $G$-equivariantisation and planar isotopy. The third equality results from the naturality of the $G$-braiding and the fact that the two diagrams are planarly isotopic. The remaining dimension factors agree with the prefactor of $\tau_2$ (with projection/embedding).

\begin{figure}[t!]
    \centering
$$\frac{1}{\dm{kg^{-1}hk}} \hspace{-2em}
\includegraphics[scale=3.50,valign=c]{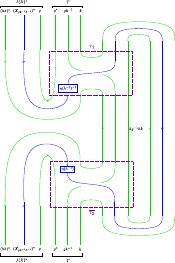}    
~~=~
\includegraphics[scale=3.50,valign=c]{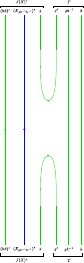}$$
    \caption{Check that condition \eqrefT{6} holds. The dimension factors from $\tau_2$ and $\overline\tau_2$ (with projection/embedding maps) cancel each other.}
    \label{fig:T6check}
\end{figure}

    Figure~\ref{fig:T6check} demonstrates that \eqrefT{6} holds for $i=2$.
    The equality is the result of planar isotopy, replacing the loop on $m_{kg^{-1}hk}$ with $\dim(m_{kg^{-1}hk})$, and the hexagon axiom for $G$-braidings.    
\end{proof}

\subsection[\texorpdfstring{$E$}{E} is a ribbon functor]{\texorpdfstring{\boldmath $E$}{E} is a ribbon functor}

We start by turning $E$ into a monoidal functor. Writing $\mathbf{X}=(X,\eta^X), \mathbf{Y}=(Y,\eta^Y)\in \hat{\B}^G$, we need to give isomorphisms (natural for $E^2$),
\[
E^0 : \1 \to E(\1) 
~~,\quad
E_{\mathbf{X},\mathbf{Y}}^2 \colon
E(\mathbf{X}) \otimes_A E(\mathbf{Y})
\to 
E\big( \mathbf{X} \otimes\mathbf{Y}\big)
\ .
\]
For $E^0$, first recall from Section~\ref{sec:orbdata} the general definition of the tensor unit in $\B_\A$, and that in Section~\ref{sec:functorJ} we already found $J(\1) = A$ with $J^0 = \id$. 
We make the same ansatz for $E$,
\[
    E^0 = \id \ .
\]
For $E^2$ we need to give a morphism from 
\[
E(\mathbf{X}) \otimes_A E(\mathbf{Y})
=\left(J(X)\otimes_A J(Y)
\,,\, \tau_1^{J(X)\otimes_A J(Y)}
\,,\, \tau_2^{J(X)\otimes_A J(Y)} \,,\, \overline{\tau_1}^{J(X)\otimes_A J(Y)}
\,,\, \overline{\tau_2}^{J(X)\otimes_A J(Y)}\right)
\]
to
\[
E\big( \mathbf{X} \otimes\mathbf{Y}\big) = \left(J(X\otimes Y), \tau_1^{J(X\otimes Y)}, \tau_2^{J(X\otimes Y)}, \overline{\tau_1}^{J(X\otimes Y)}, \overline{\tau_2}^{J(X\otimes Y)}\right) \ .
\]
For $J$ we had $J^2 = \id$, and the same turns out to be true here, that is, we make the ansatz
\[
E_{\mathbf{X},\mathbf{Y}}^2 = \id \ .
\]
If we include the projection to the relative tensor product, we get 
\[
E^2_{\mathbf{X},\mathbf{Y}} \circ \pi_{J(X),J(Y)} ~=
\sum_{g,h,a \in G}
\includegraphics[scale=4.50,valign=c]{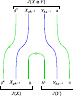}
\qquad .
\]
Note that if we also take the monoidal structure of the forgetful functor in the diagram in the beginning of Section~\ref{sec:functorE} to consist of identity morphisms, we obtain a commuting diagram of monoidal functors.

\begin{lemma}
    $E$ is a monoidal functor with $E^0$ and $E^2$ identities.
\end{lemma}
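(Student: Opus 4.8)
The plan is to reduce the monoidal coherence to the already-established fact that $J$ is monoidal with $J^0=J^2=\id$, and then to verify the only genuinely new content, namely that the identity morphisms underlying $E^0$ and $E^2$ are honest morphisms in $\B_\A$, i.e.\ that they intertwine the T-crossings. First I would observe that the forgetful functor $U\colon \B_\A\to {}_A\B_A$ is faithful and strict monoidal, and that by construction $U\circ E = J\circ(\text{forget})$ with $U(E^0)=J^0$ and $U(E^2)=J^2$. Since every monoidal-functor axiom for $(E,E^0,E^2)$ — the pentagon/associativity square, the two unit triangles, and the naturality of $E^2$ — is an equality of morphisms in $\B_\A$, it can be tested after applying the faithful $U$, where it becomes exactly the corresponding (identity-filled) axiom witnessing that $(J,J^0,J^2)$ is monoidal, which holds by the earlier lemma. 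Thus the coherence is automatic, and it remains only to prove that $E^0=\id$ and $E^2=\id$ are well-defined in $\B_\A$; since they are identities on underlying bimodules, their inverses are again identities, so once they are morphisms they are automatically isomorphisms.

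Because $E^0$ and $E^2$ are identities on underlying bimodules, being morphisms in $\B_\A$ reduces to equalities of T-crossings. For $E^2$, under the identification $J(X)\otimes_A J(Y)=J(X\otimes Y)$ from Section~\ref{sec:functorJ}, I must show that the crossings $\tau_i^{J(X)\otimes_A J(Y)}$ produced by the tensor-product formula of Section~\ref{sec:orbdata} agree with the crossings $\tau_i^{J(X\otimes Y)}$ that $E$ assigns to the product object, and likewise for the pseudo-inverses $\overline{\tau_i}$; for $E^0$ one checks the analogous equality of the unit crossings $\tau_i^A$ with the crossings of $E(\1)=(A,\dots)$. I would carry this out by expanding the product-crossing formula using the explicit $\tau_i,\overline{\tau_i}$ and the reordering isomorphisms $\phi_1,\phi_2$ from Section~\ref{sec:functorE}, and then simplifying the resulting string diagram down to the target crossing. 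The case $i=1$ is routine: $\tau_1$ and $\overline{\tau_1}$ only reorder tensor factors via $\phi_1$ and carry no equivariant data, so the diagrammatic identity is essentially the computation that made $J$ monoidal, and the $(\dm{hk}/\dm{h})^{1/2}$-type prefactors combine correctly precisely because $J^2=\id$.

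The hard part will be the case $i=2$. Here $\tau_2$ and $\overline{\tau_2}$ contain the equivariant structure map $\eta^X_{h^{-1}}$, so matching $\tau_2^{J(X)\otimes_A J(Y)}$ with $\tau_2^{J(X\otimes Y)}$ forces one to use exactly the multiplicativity $\eta^{X\otimes Y}_{h^{-1}}=\eta^X_{h^{-1}}\otimes\eta^Y_{h^{-1}}$ of the tensor-product equivariant structure — which holds strictly in the present setting since we assume the $G$-action strict and hence $\mu_{h^{-1}}=\id$ — together with the $G$-braiding hexagon axioms to slide the relevant strands past $m_h$ and $m_k$ and reorganise the picture into the prescribed form. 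Once this diagrammatic identity is established, a final bookkeeping check that the dimension prefactors cancel, of the same flavour as in Figures~\ref{fig:T3check} and~\ref{fig:T6check}, completes the verification that $E^2$ (and by the simpler unit version $E^0$) lands in $\B_\A$, and hence that $E$ is monoidal with identity structure morphisms.
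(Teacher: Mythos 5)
Your proposal is correct and follows essentially the same route as the paper: the coherence axioms are reduced to those of $J$ (the paper phrases this as the hexagon holding "automatically" since on underlying bimodules $E^2$ reduces to $J^2$, which is your faithful-forgetful-functor argument made explicit), and the real content is the string-diagram verification that $E^0=\id$ and $E^2=\id$ intertwine the $T$-crossings, where — exactly as you identify — the $i=2$ case rests on the twisted hexagon identity, the strictness-enabled multiplicativity of the equivariant structure, and bookkeeping of the square-root dimension prefactors (which, as the paper notes, were chosen precisely so that $E^0$ could be the identity).
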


\begin{proof}
We start with $E^0$. 
The tensor unit in $\B_\A$ was given in Section~\ref{sec:orbdata} in terms of the $A$-(co)action on $T$ and $\omega$ as $\1_{\B_\A}:= (A, \tau_1^A, \tau_2^A, \overline{\tau_1}^A, \overline{\tau_2}^A)$. For our orbifold datum, the $\tau_i^A$ are explicitly given by (we also write out the projection/embedding maps)
\begin{align*}  
\iota_1 \circ \tau_1^A \circ \pi_0
&= \sum_{h,k \in G}
\frac{\sqrt{\dm{hk} \dm{h}}}{\dm{h}}
\includegraphics[scale=4,valign=c]{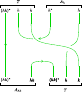}
~,
\\
\iota_2 \circ \tau_2^A \circ \pi_0
&=\sum_{h,k \in G}
\frac{\sqrt{\dm{hk} \dm{k}}}{\dm{k}}
\includegraphics[scale=4,valign=c]{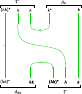} \ .
\end{align*}
Here, $\pi_0 : A \otimes T \to T = T \otimes_1 A$ is the action, and $\iota_i : T\otimes_i A = T \to T \otimes A$ the coaction.  
The dimension factors under the square root arise from $\omega_i$ (cf.\ Section~\ref{sec:orbdata}), and the $d_h$ in the denominator for $\tau_1$ is the normalisation of the coproduct of $A_h$ which enters the definition of the coaction (and similar for $\tau_2$).
We need to show that $E^0$ is indeed a morphism in $\B_\A$, and since $E^0=\id$ this amounts to checking that 
\[
\iota_i \circ \tau_i^A \circ \pi_0
~=~
\iota_{T,J(\1)} \circ \tau_i \circ \pi_{J(\1),T} \]
for $i=1,2$ and $\tau_i$ that for $J(\1)$. This is straightforward from the string diagrams given in Section~\ref{sec:functorE}.

As an aside we note that the fact that we can choose $E^0$ to be the identity map is the reason to include the a priori arbitrary looking square roots of dimension factors in the definition of $\tau_{1,2}$ in Section~\ref{sec:functorE}.

Next we check that $E^2=\id$ is a morphism in $\B_\A$, i.e.\ that $E^2$ commutes with $T$-crossings. The computations for $\tau_1$ and $\tau_2$ are similar, and we give the one for $\tau_2$ explicitly in Figure~\ref{fig:tau2_E2}. The first equality uses the twisted hexagon identity (cf.\ Section~\ref{sec:crossed_equiv}).
The factor $1/\dm{h^{-1}a}$ in the last line in Figure~\ref{fig:tau2_E2} compensates the value of the additional loop. This agrees with the overall factor needed on the right hand side: $(d_{hk}/d_k)^{\frac12}$ and $(d_{a}/d_{h^{-1}a})^{\frac12}$ from the two $\tau_2$'s, and $(d_{a} d_{h^{-1}a})^{-\frac12}$ from the insertion of $\omega_2$ (cf.\ Section~\ref{sec:orbdata}).

\begin{figure}[p!]
\begin{align*}
&    \Bigg(\frac{\dm{hk}}{\dm{k}}\Bigg)^{\!\frac{1}{2}}
    \hspace{-1.5em}
    \includegraphics[scale=3.00,valign=c]{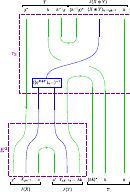}        
    = 
    \Bigg(\frac{\dm{hk}}{\dm{k}}\Bigg)^{\!\frac{1}{2}}
    \hspace{-0.5em}
    \includegraphics[scale=3.00,valign=c]{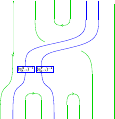}
\\    
&    = \frac{1}{\dm{h^{-1}a}}
    \Bigg(\frac{\dm{hk}}{\dm{k}}\Bigg)^{\!\frac{1}{2}}
    \includegraphics[scale=3.00,valign=c]{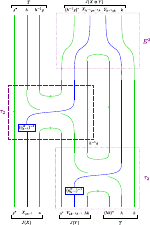}  
\end{align*}    
    \caption{The isomorphism $\tau_2$ commutes with the monoidal structure $E^2$ of $E$.
    }
    \label{fig:tau2_E2}
\end{figure}

The monoidal structure $E^2$ automatically satisfies the hexagon identity as on the underlying object in ${}_A\B_A$ it reduces to $J^2$, which does satisfy the hexagon.
\end{proof}

\begin{figure}[p!]
$$
\frac{\delta}{|G|}~
    \hspace{-1em}
    \includegraphics[scale=3.0,valign=c]{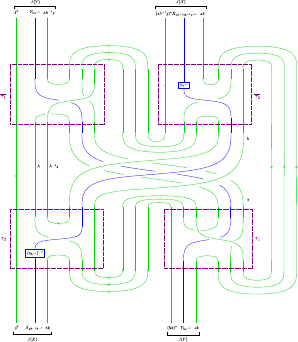}        
$$    
\caption{Substituting the expressions for $\tau_i$ and $\overline\tau_i$ into the braiding of $\B_\A$. The factor $\delta$ is given by $\delta = (\dm{a h^{-1} g}\,\dm{h^{-1}\,g}\,\dm{k}\,\dm{h}\,\dm{a})^{-1}$ as explained in the text.
}
    \label{fig:braid}
\end{figure}

Next we address compatibility of $E$ with the braiding. Inserting the definitions of $\tau_1, \tau_2, \overline{\tau_1}$ and $\overline{\tau_2}$ into the definition of the braiding $c^{\B_\A}$ from Section~\ref{sec:orbdata} results in the diagram shown in Figure~\ref{fig:braid}. 
More precisely, a homogeneous component of $c^{\B_\A}_{E(\mathbf{X},\mathbf{Y})}$ is shown (including the embedding and projection maps).
The factor $\delta$ is given by 
\begin{align*}   
    \delta &= 
    \Bigg(\frac{\dm{ak}}{\dm{a}}\Bigg)^{\!\frac{1}{2}} 
    \Bigg(\frac{\dm{hk}}{\dm{k}}\Bigg)^{\!\frac{1}{2}} 
    \Bigg(\frac{\dm{a}}{\dm{ah^{-1}g}}\Bigg)^{\!\frac{1}{2}} 
    \Bigg(\frac{\dm{k}}{\dm{ak}}\Bigg)^{\!\frac{1}{2}} 
    \frac{1}{\dm{h^{-1}g}\dm{k}} \,
    \frac{1}{\dm{h}\dm{a}}
    \Bigg(\frac{1}{\dm{a h^{-1} g}}\Bigg)^{\!\frac{1}{2}} 
    \Bigg(\frac{1}{\dm{hk}}\Bigg)^{\!\frac{1}{2}} 
    \\
    &= \frac{1}{\dm{a h^{-1} g}\,\dm{h^{-1}\,g}\,\dm{k}\,\dm{h}\,\dm{a}}
    \ ,
\end{align*}
where the various dimension factors arise, in this order, from: the factors for $\tau_1$, $\tau_2$, $\overline{\tau_1}$,  $\overline{\tau_2}$ (all with their embedding/projection maps), the $\omega^2$ below and above the braiding morphism, the $\psi^0$ below and above the braiding morphism. 
This is simplified in Figure~\ref{fig:braidsimp}. Reindexing the objects in last diagram in Figure~\ref{fig:braidsimp} 
makes the diagram independent of $k$, 
    and summing over $k$ gives an additional factor of $|G|$. Overall, we get:
\[
\iota_{J(Y),J(X)} \circ c^{\B_\A}_{E(\mathbf{X}),E(\mathbf{Y})} \circ \pi_{J(X),J(Y)} = 
\sum_{g,h,k \in G}
\frac{1}{\dm{ah^{-1}g}}
    \includegraphics[scale=4.50,valign=c]{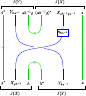}        
\]

\begin{figure}[t!]
\begin{align*}
&   \frac{1}{|G|} \frac{1}{\dm{ah^{-1}g}}
    \includegraphics[scale=4.00,valign=c]{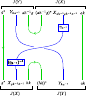}        
    = 
    \frac{1}{|G|} \frac{1}{\dm{ah^{-1}g}}
    \includegraphics[scale=4.00,valign=c]{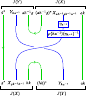}        
\\    
&    = 
    \frac{1}{|G|} \frac{1}{\dm{ah^{-1}g}}
    \includegraphics[scale=4.00,valign=c]{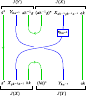}
\end{align*}    
    \caption{Simplifying the expression for the braiding given in Figure~\ref{fig:braid}.}
    \label{fig:braidsimp}
\end{figure}

\begin{lemma}
    $E$ is a braided functor.
\end{lemma}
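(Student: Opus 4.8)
The goal is to show that the monoidal functor $E$ intertwines the braidings, i.e. that for all $\mathbf X, \mathbf Y \in \hat{\mathcal B}^G$ the square

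$$E(\mathbf X)\otimes_A E(\mathbf Y)\xrightarrow{c^{\mathcal B_\A}_{E(\mathbf X),E(\mathbf Y)}} E(\mathbf Y)\otimes_A E(\mathbf X)$$

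agrees with $E$ applied to the braiding $c^{\hat{\mathcal B}^G}_{\mathbf X,\mathbf Y}$ of $\hat{\mathcal B}^G$, once both are composed with $E^2$ (which is the identity). Since $E^2 = \id$ and $E$ acts on morphisms as $J$, this reduces to a statement purely about morphisms in ${}_A\mathcal B_A$: I must identify the image of $c^{\mathcal B_\A}$ on the underlying bimodules with $J(c^{\hat{\mathcal B}^G}_{\mathbf X,\mathbf Y})$. The previous display already did the hard diagrammatic work: it substituted all the $\tau_i,\overline\tau_i$ into the definition of $c^{\mathcal B_\A}$ and, via Figures~\ref{fig:braid} and~\ref{fig:braidsimp} together with the reindexing and summation over $k$, produced the closed formula
$$
\iota_{J(Y),J(X)} \circ c^{\mathcal B_\A}_{E(\mathbf X),E(\mathbf Y)} \circ \pi_{J(X),J(Y)} =
\sum_{g,h,k}\tfrac{1}{\dm{ah^{-1}g}}\;(\text{diagram in }\texttt{BABraid5}).
$$

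The plan is therefore to \emph{read off} this diagram as a morphism of the form allowed by Lemma~\ref{lem:image-J-Hom} and match it against $J$ of the $\hat{\mathcal B}^G$-braiding. Concretely, I would first recall from the Proposition in Section~\ref{sec:crossed_equiv} that $c^{\hat{\mathcal B}^G}_{X,Y_h} = (\id\otimes\eta^X_h)\circ c^{\hat{\mathcal B}}_{X,Y_h}$, so that $J(c^{\hat{\mathcal B}^G}_{\mathbf X,\mathbf Y})$ is, by the functoriality in Lemma~\ref{lem:image-J-Hom}(iii), the bimodule morphism $\bigoplus \id_{m_g^*}\otimes (c^{\hat{\mathcal B}^G}_{\mathbf X,\mathbf Y})_{\bullet}\otimes \id_{m_h}$ sandwiched between the $m_g^*$ and $m_h$ strands. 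I would then inspect the final \texttt{BABraid5} diagram and verify that, after cancelling the coevaluation/evaluation loop responsible for the $1/\dm{ah^{-1}g}$ prefactor against the $m_g^*$--$m_h$ framing, the remaining core morphism on the $X$-- and $Y$--strands is precisely the $G$-crossed braiding of the relevant homogeneous components followed by the equivariant structure map $\eta$. The matching of the surviving dimension factor $1/\dm{ah^{-1}g}$ with the embedding/projection normalisation of $\iota_{J(Y),J(X)}$ and $\pi_{J(X),J(Y)}$ (which each carry a $\dm{}^{\mp 1}$ from Lemma~\ref{lem:tensor-A}) is what makes the identification exact.

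The main obstacle is \emph{bookkeeping of the $G$-action and the equivariant data}, not any deep structural input. Because we have passed to a strict $G$-action (Remark~\ref{rem:G-strict}), the compositors $\gamma$ and tensorators $\mu$ are identities, which removes the coherence clutter; but one still has to track where the single $\eta^X$-map coming from the definition of $c^{\hat{\mathcal B}^G}$ sits after the $\phi_1,\phi_2$ reorderings have acted. The delicate point is that $c^{\mathcal B_\A}$ is built from \emph{four} $T$-crossings and the factor $\phi = 1/|G|$, and it is only after the reindexing that trivialises the $k$-dependence and the summation $\sum_k 1 = |G|$ that this $1/|G|$ cancels; I must make sure this cancellation is performed \emph{before} comparing with $J$, since $J(c^{\hat{\mathcal B}^G})$ carries no such factor. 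Once the $|G|$ has been absorbed and the loop evaluated, the comparison is a direct graphical identity — an application of naturality of the $G$-braiding and planar isotopy — so I expect the lemma to follow by exhibiting the explicit equality of the two diagrams and invoking faithfulness of $J$ (from the Lemma in Section~\ref{sec:functorJ}) to conclude that the corresponding $\hat{\mathcal B}^G$-morphisms agree.
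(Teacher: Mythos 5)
Your proposal follows essentially the same route as the paper: it takes the precomputed closed formula for $\iota_{J(Y),J(X)} \circ c^{\B_\A}_{E(\mathbf{X}),E(\mathbf{Y})} \circ \pi_{J(X),J(Y)}$ (with the $1/|G|$ already cancelled against the $k$-summation) and verifies, by evaluating the loop responsible for the $1/\dm{ah^{-1}g}$ factor and using naturality of the $G$-braiding and planar isotopy, that it coincides with $J$ applied to $c^{\hat\B^G}_{\mathbf{X},\mathbf{Y}} = (\id\otimes\eta)\circ c^{\hat\B}$, which is exactly the diagrammatic check the paper performs. The only slip is your closing appeal to faithfulness of $J$, which is unnecessary and points the wrong way: the claim is an equality of morphisms in $\B_\A$, and since morphisms in $\B_\A$ are by definition particular bimodule maps, exhibiting the string-diagram equality in ${}_A\B_A$ already \emph{is} the desired equality $c^{\B_\A}_{E(\mathbf{X}),E(\mathbf{Y})} = E(c^{\hat\B^G}_{\mathbf{X},\mathbf{Y}})$, with nothing left to transport back to $\hat\B^G$.
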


\begin{proof}
This statement is equivalent to the claim that the diagram
\[\begin{tikzcd}
	{E(\mathbf{X})\otimes_A E(\mathbf{Y})} && {E(\mathbf{Y})\otimes_A E(\mathbf{X})} \\
	{E(\mathbf{X}\otimes \mathbf{Y})} && {E(\mathbf{Y}\otimes \mathbf{X})}
	\arrow["{{c^{\B_\A}_{E(\mathbf{X}),E(\mathbf{Y})}}}", from=1-1, to=1-3]
	\arrow["{E^2_{\mathbf{X},\mathbf{Y}}}"', from=1-1, to=2-1]
	\arrow["{E^2_{\mathbf{Y},\mathbf{X}}}", from=1-3, to=2-3]
	\arrow["{E(c^{\hat{\B}^G})}"', from=2-1, to=2-3]
\end{tikzcd}\]
commutes for all $\mathbf{X}=(X,\eta^X), \mathbf{Y}=(Y,\eta^Y)\in \hat{\B}^G$.
This can be checked in terms of string diagrams: 
\[
\frac{1}{\dm{ah^{-1}g}}
\hspace{-2em}
    \includegraphics[scale=3.50,valign=c]{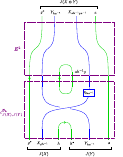}        
    =
    \includegraphics[scale=3.50,valign=c]{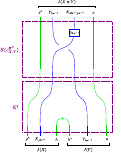}     
    \raisebox{-10em}{\qedhere}
\]
\end{proof}

\begin{figure}[t]
\[
\frac{1}{\dm{g^{-1}a}} 
    \includegraphics[scale=3.50,valign=c]{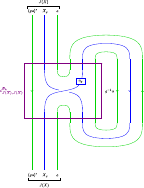}        
\]
\caption{The twist of $\B_\A$, evaluated on $E(\mathbf{X})$ for $\mathbf{X} = (X,\eta^X)$.}
    \label{fig:twist}
\end{figure}

The twist of $\B_\A$, evaluated on $E(\mathbf{X})$, is given in Figure~\ref{fig:twist} and can easily be simplified to
\[
\theta^{\B_\A}_{E(\mathbf{X})} = 
    \includegraphics[scale=4.00,valign=c]{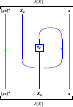}        
\]

\begin{lemma}
    $E$ is a ribbon functor.
\end{lemma}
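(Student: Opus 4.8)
The plan is to exploit that the previous lemmas already establish $E$ as a braided monoidal functor with $E^0 = E^2 = \id$. For such a functor, being ribbon is equivalent to a single additional condition: compatibility with the twists,
\[
\theta^{\B_\A}_{E(\mathbf X)} = E\big(\theta^{\hat\B^G}_{\mathbf X}\big),
\qquad \mathbf X = (X,\eta^X) \in \hat\B^G.
\]
First I would make the right hand side explicit. Since $E$ acts on morphisms via $J$, and $J$ sends $\hat f = \bigoplus_p \hat f_p$ to $\bigoplus_{g,h} \id_{m_g^*} \otimes \hat f_{gh^{-1}} \otimes \id_{m_h}$, the equivariant twist $\theta^{\hat\B^G}_{X_p} = \eta^X_p \circ \theta^{\hat\B}_{X_p}$ from the Proposition in Section~\ref{sec:crossed_equiv} gives, on the summand $m_g^* \otimes X_{gh^{-1}} \otimes m_h$ of $J(X)$, the clean target
\[
E\big(\theta^{\hat\B^G}_{\mathbf X}\big)\big|_{g,h} = \id_{m_g^*} \otimes \big(\eta^X_{gh^{-1}} \circ \theta^{\hat\B}_{X_{gh^{-1}}}\big) \otimes \id_{m_h},
\]
which I want to recover starting from the left hand side.

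Next I would start from the already-simplified diagram for $\theta^{\B_\A}_{E(\mathbf X)}$ displayed just above the lemma, which carries the scalar prefactor $1/\dm{g^{-1}a}$. The key diagrammatic input is the compatibility of the $G$-ribbon twist with the $G$-braiding from the definition of a $G$-crossed ribbon category: I would use it to collapse the braiding morphisms together with the twist on the $X$-strand into a single $G$-twist $\theta^{\hat\B}_{X_{gh^{-1}}}$, keeping track of the $\varphi$-action labels produced along the way. The equivariant isomorphism $\eta^X_{gh^{-1}}$ that is built into $\tau_{1,2}$ then turns $\varphi(gh^{-1})(X_{gh^{-1}})$ back into $X_{gh^{-1}}$, reproducing exactly the composite $\eta^X_{gh^{-1}} \circ \theta^{\hat\B}_{X_{gh^{-1}}}$. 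Finally the remaining closed $m$-loop evaluates to its quantum dimension $\dm{g^{-1}a}$, cancelling the prefactor $1/\dm{g^{-1}a}$, and planar isotopy straightens the $m_g^*$ and $m_h$ strands into identities, leaving precisely the expression above.

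The hard part will be the bookkeeping of the $G$-action labels under the braiding moves: every crossing shifts the grading of the over-strand by a $\varphi$-action, so I must track these carefully and check that, after applying the twist/braiding compatibility axiom (and using that the compositors $\gamma$ are identities, since we have assumed the $G$-action strict), all the action labels reassemble into the single factor $\varphi(gh^{-1})$ that $\eta^X_{gh^{-1}}$ is designed to absorb. Once this matching of labels and the scalar from the loop are confirmed, the equality $\theta^{\B_\A}_{E(\mathbf X)} = E(\theta^{\hat\B^G}_{\mathbf X})$ holds, and since $E$ is already braided monoidal this completes the proof that $E$ is a ribbon functor.
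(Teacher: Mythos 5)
Your overall strategy coincides with the paper's: since the earlier lemmas already make $E$ a braided monoidal functor with $E^0=E^2=\id$, it suffices to verify $\theta^{\B_\A}_{E(\mathbf X)}=E\big(\theta^{\hat\B^G}_{\mathbf X}\big)$; your explicit form of the right hand side, $\id_{m_g^*}\otimes\big(\eta^X_{gh^{-1}}\circ\theta^{\hat\B}_{X_{gh^{-1}}}\big)\otimes\id_{m_h}$, and your expectation that the closed $m$-loop cancels the dimension prefactor both match the paper (minor slip: the prefactor $1/\dm{g^{-1}a}$ appears in Figure~\ref{fig:twist}, not in the already-simplified diagram displayed immediately above the lemma, where the loop has already been evaluated).

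The genuine problem is your key step. You propose to use the compatibility of the $G$-ribbon twist with the $G$-braiding (axiom (a) of the $G$-ribbon structure) to ``collapse the braiding morphisms together with the twist on the $X$-strand into a single $G$-twist''. But the diagram for $\theta^{\B_\A}_{E(\mathbf X)}$ contains no twist morphism of $\hat\B$ at all: it is built exclusively from the $T$-crossings $\tau_i,\overline{\tau_i}$ (which for $E(\mathbf X)$ consist of $G$-braidings of $X$ with the objects $m_g$ together with the isomorphisms $\eta^X$), the duality morphisms of $\B_\A$, and scalars. So there is no twist to collapse with; what must actually be done is the converse: recognize the closed-off braiding -- a curl traced with duality morphisms -- as $\theta^{\hat\B}_{X_{gh^{-1}}}$. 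Axiom (a) cannot deliver this on its own, since it only relates $\theta_{X\otimes Y}$ to component twists and a double braiding; identifying a curl with the twist is the $G$-crossed analogue of the standard ``curl equals twist'' identity, whose derivation also needs the compatibility with duals (axiom (c)). Alternatively, if one works in the graphical convention where the twist \emph{is} drawn as a curl, then no twist axiom is needed at all, and the two diagrams agree purely by naturality of the $G$-braiding (sliding $\eta^X$ through the crossing) -- which is exactly, and only, what the paper's proof invokes. With your appeal to axiom (a) replaced by either of these two correct justifications, the rest of your plan (label bookkeeping under strictness of the action, loop evaluation, planar isotopy) goes through.
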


\begin{proof}
We need to check that the twist satisfies $$\theta^{\B_\A}_{E(X,\eta^X)}=E\left(\theta^{\hat{\B}^G}_{(X,\eta^X)}\right).$$ 
The left hand side was already computed above. Using the expression for ribbon twist in the equivariantised category from Section~\ref{sec:crossed_equiv}, we see that the right hand side is
\[
E\left(\theta^{\hat{\B}^G}_{(X,\eta^X)}\right)
=\includegraphics[scale=4.00,valign=c]{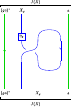} \ ,
\]
which is indeed equal to $\theta^{\B_\A}_{E(X,\eta^X)}$ by naturality of the $G$-braiding.
\end{proof}

\subsection[\texorpdfstring{$E$}{E} is an equivalence]{\texorpdfstring{\boldmath $E$}{E} is an equivalence}

To complete the proof of Theorem~\ref{thm:main}, we still need to show that the functor $E$ is an equivalence. This will be done in the next two lemmas.

Recall that $m_g$ is a $\1$-$A_g$-bimodule, and $m_g^*$ an $A_g$-$\1$-bimodule. These two bimodules provide a Morita-context: $A_g = m_g^* \otimes m_g$ and $m_g \otimes_A m_g^* \cong \1$ (cf.\ Lemma~\ref{lem:tensor-A}).

\begin{lemma}\label{lem:AgAh-bimodhom}
\begin{enumerate}
	\item Let $M$ be an $A$-$A$-bimodule in $\B \equiv \B_e$. Then there exist $N_{g,h} \in \B_{gh^{-1}}$ s.t.
	\[
	M \cong \bigoplus_{g,h \in G} m_g^* \otimes N_{g,h} \otimes m_h\]
	as $A$-$A$-bimodules.

	\item Let now $M =  \bigoplus_{g,h \in G} m_g^* \otimes N_{g,h} \otimes m_h$, $M' =  \bigoplus_{g,h \in G} m_g^* \otimes N'_{g,h} \otimes m_h$ and $F : M \to M'$ an $A$-$A$-bimodule map. Then there are $f_{g,h} : N_{g,h} \to N'_{g,h}$ such that
	\[
	F = \sum_{g,h} \id_{m_g^*} \otimes f_{g,h} \otimes \id_{m_h} \ .
	\]
\end{enumerate}
\end{lemma}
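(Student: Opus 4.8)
The plan is to prove both parts by Morita theory for the $\Delta$-separable symmetric Frobenius algebra $A = \bigoplus_{g \in G} A_g$, $A_g = m_g^* \otimes m_g$, using the Morita context recorded just above the lemma: $m_g \otimes_A m_h^* \cong \delta_{g,h}\,\1$ (the special case $X=Y=\1$ of Lemma~\ref{lem:tensor-A}) together with $m_g^* \otimes m_g = A_g$. The two statements are then the standard reconstruction and $\Hom$-identification of a Morita equivalence, adapted to the graded situation.

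For part~(1), I would start from the canonical unit isomorphism $M \cong A \otimes_A M \otimes_A A$ of $A$-$A$-bimodules, which holds because $A$ is the tensor unit for $\otimes_A$ (the projection/embedding maps for $\otimes_A$ exist since $\C$ is idempotent-complete and $A$ is $\Delta$-separable). Substituting $A = \bigoplus_g m_g^* \otimes m_g$ on both sides and using associativity of the relative tensor product to move the $m_g^*$ and $m_h$ factors outside $\otimes_A$ yields
\[
M \cong \bigoplus_{g,h \in G} m_g^* \otimes \big( m_g \otimes_A M \otimes_A m_h^* \big) \otimes m_h \ ,
\]
so that setting $N_{g,h} := m_g \otimes_A M \otimes_A m_h^*$ gives the claimed form. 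It then remains to check the grading: since $m_g \in \B_g$, $M \in \B_e$ and $m_h^* \in \B_{h^{-1}}$, the ordinary tensor product $m_g \otimes M \otimes m_h^*$ lies in $\B_{gh^{-1}}$, and $N_{g,h}$, being the image of the $\Delta$-separability idempotent on it, again lies in $\B_{gh^{-1}}$ because each $\B_{gh^{-1}}$ is an idempotent-complete additive subcategory.

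For part~(2), I would apply the compression functor $m_g \otimes_A (-) \otimes_A m_h^*$ to $F$. Using Lemma~\ref{lem:tensor-A} repeatedly gives $m_g \otimes_A M \otimes_A m_h^* = N_{g,h}$ and likewise $N'_{g,h}$, so this defines $f_{g,h} := m_g \otimes_A F \otimes_A m_h^* \colon N_{g,h} \to N'_{g,h}$. To recover $F$ itself I would invoke naturality of the unit isomorphism $\Phi_M \colon A \otimes_A M \otimes_A A \isomto M$ from part~(1): the naturality square along $F$ and $\id_A \otimes_A F \otimes_A \id_A$ commutes, and under the decomposition of part~(1) the map $\id_A \otimes_A F \otimes_A \id_A$ is exactly $\bigoplus_{g,h} \id_{m_g^*} \otimes f_{g,h} \otimes \id_{m_h}$, whence $F = \sum_{g,h} \id_{m_g^*} \otimes f_{g,h} \otimes \id_{m_h}$. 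Alternatively, one argues directly as in Lemma~\ref{lem:image-J-Hom}: left/right $A$-linearity of $F$ forces the block from $(m_{g'}^*, m_{h'})$ to $(m_g^*, m_h)$ to vanish unless $g'=g$ and $h'=h$, and $A_g$-$A_h$-bilinearity of the diagonal block is equivalent to it having the form $\id_{m_g^*} \otimes f_{g,h} \otimes \id_{m_h}$.

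The routine-but-essential point to get right is the bookkeeping that turns the string of canonical isomorphisms in part~(1) into a genuine $A$-$A$-bimodule isomorphism: one must check that the splitting $A = \bigoplus_g A_g$ is compatible with the residual left/right $A$-actions after the $m_g^*, m_h$ factors have been moved outside $\otimes_A$, and that the $\Delta$-separability projection/embedding maps are tracked consistently throughout. The grading statement $N_{g,h} \in \B_{gh^{-1}}$ is the only genuinely new input beyond textbook Morita theory, but it follows immediately from the grading of $\hat\B$ once the decomposition is in place. I therefore expect no conceptual obstacle here, only careful tracking of the relative tensor products.
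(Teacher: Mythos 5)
Your proposal is correct and takes essentially the same Morita-theoretic route as the paper: part (1) decomposes $M$ by the action of the unit components of the $A_g$ (your $M \cong A \otimes_A M \otimes_A A$ with $A = \bigoplus_g m_g^* \otimes m_g$ is the same decomposition), setting $N_{g,h} = m_g \otimes_A M \otimes_A m_h^*$, and part (2) is exactly the fullness of the Morita equivalence $m_g^* \otimes (-) \otimes m_h \colon \B_{gh^{-1}} \to {}_{A_g}\B_{A_h}$, which your compression-plus-naturality argument simply makes explicit. No gaps; your placement of the duals in $N_{g,h}$ is in fact the one consistent with the stated bimodule structures ($m_g$ a right $A_g$-module, $m_h^*$ a left $A_h$-module).
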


\begin{proof}
Part 1:
The units $\eta_g :  \1 \to A_g$ of the individual algebras act as idempotents. Set
    $$M_{g,h} := \text{im}(\eta_g \otimes_A M \otimes_A \eta_h) $$
Then $M = \bigoplus_{g,h \in G} M_{g,h}$ as $A$-$A$-bimodules. We have the following isomorphism of $A_g$-$A_h$-bimodules,
\[
M_{g,h} \cong A_g \otimes_A M \otimes_A A_h \cong m_g \otimes m_g^* \otimes_A M \otimes_A m_h \otimes m_h^* \ ,
\]
so that we can take $N_{g,h} := m_g^* \otimes_A M \otimes_A m_h$.

\medskip

\noindent
Part 2: Since $F$ commutes with the action of the units $\eta_g$ on both sides, it decomposes as $F = \sum_{g,h} F_{g,h}$ with $F_{g,h} : m_g^* \otimes N_{g,h} \otimes m_h \to  m_g^* \otimes N'_{g,h} \otimes m_h$ an $A_g$-$A_h$-bimodule map. Since $m_g$ and $m_h$ are Morita-contexts, the functor $m_g^* \otimes (-) \otimes m_h$ is an equivalence of categories from $\B_{gh^{-1}}$ to $A_g$-$A_h$-bimodules in $\B$, and hence $F_{g,h} = \id_{m_g^*} \otimes f_{g,h} \otimes \id_{m_h}$ for appropriate $f_{g,h}$.
\end{proof}

\begin{lemma}
    $E$ is an equivalence of categories.
\end{lemma}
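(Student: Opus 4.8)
The plan is to prove that $E$ is fully faithful and essentially surjective, in each case exploiting the two structural lemmas just established: Lemma~\ref{lem:image-J-Hom} describes which bimodule maps lie in the image of $J$, and Lemma~\ref{lem:AgAh-bimodhom} describes arbitrary $A$-$A$-bimodules in $\B$ and their morphisms. The underlying principle throughout is a \emph{dictionary}: the $T$-crossing $\tau_1$ (which for $E$ is the canonical map $\propto \phi_1^{-1}$, independent of $\eta$) encodes ``the datum descends from $\hat\B$'', while $\tau_2$ (whose only extra ingredient is the factor $\eta^X_{h^{-1}}$) encodes exactly the $G$-equivariance structure.

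\emph{Faithfulness and fullness.} Faithfulness is immediate: $E$ acts on morphisms by $f \mapsto J(f)$, morphisms of $\hat\B^G$ are in particular morphisms of $\hat\B$, and $J$ is faithful. For fullness, let $F \in \Hom_{\B_\A}(E(\mathbf{X}), E(\mathbf{Y}))$, so that on underlying bimodules $F \colon J(X) \to J(Y)$ is an $A$-$A$-bimodule map commuting with the $T$-crossings. By Lemma~\ref{lem:AgAh-bimodhom}(2) it decomposes as $F = \sum_{g,h} \id_{m_g^*} \otimes f_{g,h} \otimes \id_{m_h}$ with $f_{g,h}\colon X_{gh^{-1}} \to Y_{gh^{-1}}$, so conditions $(i)$--$(iii)$ of Lemma~\ref{lem:image-J-Hom} hold automatically. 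I would then show that commutation of $F$ with $\tau_1$ forces condition $(iv)$: since $\tau_1 = \mathrm{const}\cdot\phi_1^{-1}$ with the same prefactor for $X$ and $Y$, and $\phi_1$ is natural in the middle strand, the intertwining equation reduces to $f_{g,hk} = f_{gk^{-1},h}$, which upon iterating gives $f_{g,h} = f_{gh^{-1},e}$. Hence $F = J(\hat f)$ for a unique $\hat f\colon X \to Y$ in $\hat\B$. Finally, commutation of $F$ with $\tau_2$, after cancelling the canonical $\phi_2$ and using naturality of the $G$-braiding, translates into the equivariance square relating $\hat f$, $\eta^X$ and $\eta^Y$, so $\hat f \in \Hom_{\hat\B^G}(\mathbf{X},\mathbf{Y})$ and $E(\hat f) = F$.

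\emph{Essential surjectivity.} Start from an arbitrary $W = (M, \tau_1^M, \tau_2^M, \overline{\tau_1}^M, \overline{\tau_2}^M) \in \B_\A$. By Lemma~\ref{lem:AgAh-bimodhom}(1) we may write $M = \bigoplus_{g,h} m_g^* \otimes N_{g,h} \otimes m_h$ with $N_{g,h} \in \B_{gh^{-1}}$. The crossing $\tau_1^M$ is an isomorphism of $A$-$AAA$-bimodules (its inverse is built from $\overline{\tau_1}^M$), and comparing the component decompositions of $M \otimes_0 T$ and $T \otimes_1 M$ as in Section~\ref{sec:functorE} forces $N_{g,hk}\cong N_{gk^{-1},h}$, hence $N_{g,h}\cong N_{gh^{-1},e}$. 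Setting $X_u := N_{u,e}\in\B_u$ and $X := \bigoplus_u X_u$, this yields an $A$-$A$-bimodule isomorphism $M\cong J(X)$, which I would choose compatibly with $\tau_1^M$ so that the transported crossing becomes the canonical one. Reading off the transported $\tau_2^M$ then produces a family $\eta^X_g\colon \varphi(g)(X)\to X$ of the shape appearing in $E$; axioms \eqrefT{2}, \eqrefT{6} and \eqrefT{7} force the $\eta^X_g$ to be isomorphisms satisfying the $\gamma$-coherence condition, so $(X,\eta^X)\in\hat\B^G$ and $W\cong E(X,\eta^X)$.

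\emph{Main obstacle.} The technical heart of both halves is verifying this dictionary at the level of the explicit formulas for $\tau_1$ and $\tau_2$: one must confirm that commutation with, respectively the very existence of, the $T$-crossings is equivalent to the $G$-equivariance data, and that the seven axioms \eqrefT{1}--\eqrefT{7} translate precisely into the equivariance cocycle condition. This is carried out by the same kind of diagrammatic manipulation as in the well-definedness computations for $E$ (Figures~\ref{fig:T3check} and~\ref{fig:T6check}), using naturality of the $G$-braiding and the twisted hexagon; the bookkeeping of dimension factors and of the compositors $\gamma_{g,h}$ under the (now strict) $G$-action is where most of the care is required.
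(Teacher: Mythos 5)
Your proposal follows essentially the same route as the paper: faithfulness from faithfulness of $J$, fullness by decomposing $F$ via Lemma~\ref{lem:AgAh-bimodhom} and translating commutation with $\tau_1$, $\tau_2$ into condition $(iv)$ of Lemma~\ref{lem:image-J-Hom} and equivariance of $\hat f$, and essential surjectivity by extracting the component isomorphisms $t^1, t^2$ from the crossings, setting $X_g = N_{g,e}$, building $\eta^X$ from $t^2$ (and $t^1$), and checking the transported structure matches. One minor correction of bookkeeping: in the paper the $\gamma$-coherence of $\eta^X$ is derived from \eqrefT{1}--\eqrefT{3} (the pseudo-inverse axioms \eqrefT{4}--\eqrefT{7} only supply invertibility of the $t^i$), not from \eqrefT{2}, \eqrefT{6}, \eqrefT{7} as you suggest.
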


\begin{proof}
We will show that $E$ is fully faithful and essentially surjective.

\medskip

\noindent
\textit{Faithful:} This is clear as $J$ is faithful and morphism spaces in $\B_\A$ are subspaces of bimodule maps.

\medskip

\noindent
\textit{Full:}
Let $F\colon E\left(X, \eta^X\right) \to E\left(Y,\eta^Y\right)$ be a morphism in $\B_\A$.
This means, that
$F\colon J(X)\to J(Y)$ is an $A$-$A$-bimodule map such that the diagram
	\[\begin{tikzcd}
		J(X)\otimes_0 T \arrow[d, "\tau_i"] \arrow[r, "{F\,\otimes_0\,\id}"] & J(Y)\otimes_0 T \arrow[d, "\tau_i"] \\
		T\otimes_i J(X) \arrow[r, "{\id\,\otimes_i\,F}"] & T\otimes_i J(Y)
	\end{tikzcd}\]
commutes for $i\in\{1,2\}$.
By Lemma~\ref{lem:AgAh-bimodhom}, we can write
$F = \sum_{g,h} \id_{m_g^*} \otimes f_{g,h} \otimes \id_{m_h}$ for some $f_{g,h} : X_{gh^{-1}} \to Y_{gh^{-1}}$.
The above commuting diagram, together with the definition of $\tau_i$ for $E\left(X, \eta^X\right)$ and $E\left(Y,\eta^Y\right)$ results in the conditions
\begin{align*}
\tau_1 &: f_{g,hk} = f_{gk^{-1},h} \ ,
\\
\tau_2 &: (\eta_{h^{-1}}^Y)^{-1} \circ f_{g,hk} = \varphi(h^{-1})f_{gk^{-1},h} \circ (\eta_{h^{-1}}^X)^{-1} \ .
\end{align*}
The condition for $\tau_1$ shows that $f_{g,k} = f_{gk^{-1},e}$, and so by Lemma~\ref{lem:image-J-Hom}, $F = J(\hat f)$, where $\hat f : X \to Y$ has components $\hat f_g = f_{g,e}$. Condition $\tau_2$ ensures that $\hat f : (X,\eta^X) \to (Y,\eta^Y)$ is a morphism in $\hat\B^G$. Indeed, setting $h = k^{-1}$ results in $f_{g,e} \circ \eta_{k}^X = \eta_k^Y \circ \varphi(k)f_{g,e}$.

\medskip

\noindent
\textit{Essentially surjective:}
Let $(M, \tau_1, \tau_2, \overline{\tau_1}, \overline{\tau_2})) \in \B_\A$.
We need to give $(X,\eta^X) \in \hat{\B}^G$ such that $(M, \tau_1, \tau_2, \overline{\tau_1}, \overline{\tau_2}) \cong E(X,\eta^X)$. We first construct $X \in \hat\B$ such that $M \cong J(X)$ as bimodules, and then find the equivariant structure $\eta^X$ on $X$.

First note that by Lemma~\ref{lem:AgAh-bimodhom} we may assume that $M = \bigoplus_{a,b \in G} m_a^* \otimes N_{a,b} \otimes m_b$ for some $N_{a,b} \in \B_{ab^{-1}}$.
We now proceed along the same lines as when defining the functor $E$ in Section~\ref{sec:functorE}, except that here we do not already know that $M \cong J(X)$ for some $X$.
Namely, we have
\begin{align*}
	M \otimes_0 T &= \bigoplus_{g,h,k,x \in G} (m_g^* \otimes N_{g,x} \otimes m_x) \otimes_A (m_{hk}^* \otimes m_h \otimes m_k)
\overset{\text{Lem.\,\ref{lem:tensor-A}}}=
\bigoplus_{g,h,k \in G} m_g^* \otimes N_{g,hk} \otimes m_h \otimes m_k \ ,
\\
T \otimes_1 M &\cong \bigoplus_{g,h,k\,\in\,G} m_g^* \otimes
\varphi\big( k g^{-1} \big)(m_k)
\otimes N_{gk^{-1},h} \otimes m_h \ ,
\\
T \otimes_2 M &= \bigoplus_{g,h,k\,\in\,G} m_g^* \otimes m_h \otimes N_{h^{-1}g,k} \otimes m_k \ ,
    \end{align*}
where the outer $A_0$-, $A_1$- and $A_2$-action includes the projection to the action of $A_g \subset A_0$, $A_h \subset A_1$, and $A_k \subset A_2$.
Define the $A_0$-$A_1\otimes A_2$-bimodule isomorphism $\varphi_1: T \otimes_1 M \isomto \bigoplus_{g,h,k\,\in\,G} m_g^* \otimes N_{gk^{-1},h} \otimes m_h \otimes m_k$ and $\varphi_2 : T \otimes_2 M \isomto \bigoplus_{g,h,k\,\in\,G} m_g^* \otimes \varphi(h^{-1})\big(N_{h^{-1}g,k}\big) \otimes m_h \otimes m_k$ by
\[
\varphi_1 =
\includegraphics[scale=4.50,valign=c]{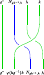} ~~,\quad
\varphi_2 =
\includegraphics[scale=4.50,valign=c]{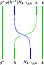} \ .
\]
Then also
\begin{align*}
\varphi_1 \circ \tau_1 &: \bigoplus_{g,h,k \in G} m_g^* \otimes N_{g,hk} \otimes m_h \otimes m_k \isomto \bigoplus_{g,h,k \in G} m_g^* \otimes N_{gk^{-1},h} \otimes m_h \otimes m_k
~,
\\
\varphi_2 \circ \tau_2&: \bigoplus_{g,h,k \in G} m_g^* \otimes N_{g,hk} \otimes m_h \otimes m_k \isomto \bigoplus_{g,h,k \in G} m_g^* \otimes \varphi(h^{-1}) N_{h^{-1}g,k} \otimes m_h \otimes m_k
\end{align*}
are $A_0$-$A_1\otimes A_2$-bimodule isomorphisms.
As in the proof of Lemma~\ref{lem:AgAh-bimodhom}\,(ii) we conclude that
\begin{align*}
\varphi_1 \circ \tau_1 &= \bigoplus_{g,h,k \in G}\dm{gk^{-1}} \Bigg(\frac{\dm{hk}}{\dm{h}}\Bigg)^{\!\frac{1}{2}} \cdot  \id_{m_g^*} \otimes t^1_{g,h,k} \otimes \id_{m_h} \otimes \id_{m_k}
\\
\varphi_2 \circ \tau_2 &= \bigoplus_{g,h,k \in G}
\dm{h^{-1}g} \Bigg(\frac{\dm{hk}}{\dm{h}}\Bigg)^{\!\frac{1}{2}}
\cdot
 \id_{m_g^*} \otimes t^2_{g,h,k} \otimes \id_{m_h} \otimes \id_{m_k}
\end{align*}
for some isomorphisms
\[
    t^1_{g,h,k}: N_{g,hk} \isomto N_{gk^{-1}, h}
    ~~,\quad
    t^2_{g,h,k}: N_{g,hk} \isomto \varphi(h^{-1})N_{h^{-1}g, k}
    \ .
\]
in $\B_{gk^{-1}h^{-1}}$. 
The dimension factors mirror those for $\tau_i^{g,h,k}$ in Section~\ref{sec:functorE}.
Inverting this relation to extract $\tau_i$, we get
{\allowdisplaybreaks
\begin{align*}    
\iota_{T,M} \circ \tau_1 \circ \pi_{M,T} &~=~
\Bigg(\frac{\dm{hk}}{\dm{h}}\Bigg)^{\!\frac{1}{2}}
~~
\includegraphics[scale=4.25,valign=c]{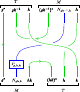}
~~, 
\\
\iota_{T,M} \circ \tau_2 \circ \pi_{M,T} &~=~
\Bigg(\frac{\dm{hk}}{\dm{k}}\Bigg)^{\!\frac{1}{2}}
~~
\includegraphics[scale=4.25,valign=c]{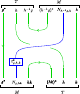}
\ .
\end{align*}}
For later use, we record the following conditions on the $t^i_{g,h,k}$ implied by the fact that $\tau_1$, $\tau_2$ satisfy \eqrefT{1}--\eqrefT{3}. Namely, for all $a,g,h,k \in G$ we have: 
\begin{align*}
	t^1_{g,h,k} &= t^1_{ga^{-1}, h, ka^{-1}} \circ t^1_{g, hka^{-1},a}
	&& \eqrefT{1}
	\\
	\varphi(h^{-1})t^1_{h^{-1}g, ka^{-1}, a} \circ t^2_{g,h,k} &= t^2_{ga^{-1},h,ka^{-1}} \circ t^1_{g,hka^{-1},a}
	&& \eqrefT{2}
	\\
	\varphi(h^{-1})t^2_{h^{-1}g, ka^{-1}, a} \circ t^2_{g,h,k} &= t^2_{g,hka^{-1},a}
	&& \eqrefT{3}
\end{align*}
To check e.g.\ the last condition, one can use Figure~\ref{fig:T3check} as a guide and replace the $\eta$-morphisms there by $t^2$ morphisms as in the string diagram for $\tau_2$ above.

We can now define $X = \bigoplus_{g \in G} X_g \in \hat\B$ such that $M \cong J(X)$ as $A$-$A$-bimodules. Namely, we set
\[
X_g := N_{g,e}
\]
and observe that $t^1_{g,h,k}$ specialised to $h=e$ gives an isomorphism $t^1_{g,e,k}: N_{g,k} \isomto N_{gk^{-1}, e} = X_{gk^{-1}}$.

Next we turn to the equivariant structure $\eta^X_a : \varphi(a)X \isomto  X$. We define it  componentwise via
\[
\eta^X_a\big|_{\varphi(a)X_b}:= 
\Big[ \varphi(a)X_{b} = \varphi(a)N_{b,e} \xrightarrow[]{\left(t^2_{a^{-1}b,a^{-1},e}\right)^{-1}} N_{a^{-1}b,a^{-1}} \xrightarrow[]{\left(t^1_{a^{-1}ba, a^{-1}, a}\right)^{-1}} N_{a^{-1}ba,e} = X_{a^{-1}ba} \Big] \ .
\]
The coherence condition condition for the $(\eta_a^X)_{a \in G}$ then amounts to commutativity of the outer diagram in:
\[
\begin{tikzcd}
	{\vphi(g) \vphi(h) N_{b,e}} &&& {\vphi(g)N_{h^{-1}b,h^{-1}}} &&& {\vphi(g)N_{h^{-1}bh,e}} \\
	\\
	&&&&&& {N_{g^{-1}h^{-1}bh,g^{-1}}} \\
	\\
	{\vphi(g) \vphi(h) N_{b,e}} &&& {N_{g^{-1}h^{-1}b,g^{-1}h^{-1}}} &&& {N_{g^{-1}h^{-1}bhg,e}}
	\arrow["{\vphi(g) \big(t^2_{h^{-1}b,h^{-1},e}\big)^{-1}}", from=1-1, to=1-4]
	\arrow["{\gamma_{g,h}=\id}"', from=1-1, to=5-1, equal]
	\arrow["\eqrefT{3}"{description}, draw=none, from=1-1, to=5-4]
	\arrow["{\vphi(g) \big(t^1_{h^{-1}bh,h^{-1},h}\big)^{-1}}", from=1-4, to=1-7]
	\arrow["\eqrefT{2}"{description}, draw=none, from=1-4, to=3-7]
	\arrow["{\big(t^2_{g^{-1}h^{-1}b,g^{-1},h^{-1}}\big)^{-1}}"{description}, from=1-4, to=5-4]
	\arrow["{\big(t^2_{g^{-1}h^{-1}bh,g^{-1},e}\big)^{-1}}", from=1-7, to=3-7]
	\arrow[""{name=0, anchor=center, inner sep=0}, "{\big(t^1_{g^{-1}h^{-1}bhg,g^{-1},g}\big)^{-1}}", from=3-7, to=5-7]
	\arrow["{\big(t^2_{g^{-1}h^{-1}b,g^{-1}h^{-1},e}\big)^{-1}}"', from=5-1, to=5-4]
	\arrow["{\big(t^1_{g^{-1}h^{-1}bh,g^{-1}h^{-1},h}\big)^{-1}}"{description}, from=5-4, to=3-7]
	\arrow[""{name=1, anchor=center, inner sep=0}, "{\big(t^1_{g^{-1}h^{-1}bhg,g^{-1}h^{-1},hg}\big)^{-1}}"', from=5-4, to=5-7]
	\arrow["\eqrefT{1}"', shift left=5, draw=none, from=1, to=0]
\end{tikzcd}
\]
Here we label the center of the subdiagrams by which coherence axioms imply their commutativity.

Having constructed our candidate $(X,\eta^X)$, we still need to check that it is isomorphic to 
$(M, \tau_1, \tau_2, \overline{\tau_1}, \overline{\tau_2})$. Consider the isomorphism of $A$-$A$-bimodules
\[
\Phi := \Big[ M = \bigoplus_{g,h \in G} m_g^* \otimes N_{g,h} \otimes m_h
\xrightarrow{\bigoplus_{g,h \in G} \id \otimes t^1_{g,e,h} \otimes \id} \bigoplus_{g,h \in G} m_g^* \otimes N_{gh^{-1},e} \otimes m_h = J(X) \Big] \ .
\]
We will check that $\Phi : (M, \tau_1, \tau_2, \overline{\tau_1}, \overline{\tau_2}) \to E(X,\eta^X)$ is an isomorphism in $\B_\A$. For this it remains to see that $\Phi$ commutes with $\tau_i$ as in Definition~\ref{def:cat-of-wilson}. One finds the conditions:
\begin{align*}	
\tau_1 ~&:~ t^1_{g,e,hk} = t^1_{gk^{-1},e,h} \circ t^1_{g,h,k}
\ ,
\\
\tau_2 ~&:~ \Big(\eta_{h^{-1}}\big|_{\varphi(h^{-1})X_{h^{-1}g k^{-1}}}\Big)^{-1} 
\circ
t^1_{g,e,hk}
= \varphi(h^{-1})t^1_{h^{-1}g,e,k} \circ t^2_{g,h,k} 
\ .
\end{align*}
The condition for $\tau_1$ is just a special case of the identity that $t^1$ satisfies because of \eqrefT{1} as listed above (set $a=k$, $h=e$ there). To verify the condition for $\tau_2$ we substitute the definition of $\eta$ to get
\[
t^2_{g k^{-1} ,h,e} \circ \underbrace{t^1_{g k^{-1} h^{-1}, h, h^{-1}} 
\circ 
t^1_{g,e,hk}}_{=t^1_{g,h,k} \text{ by \eqrefT{1}}} 
= 
\varphi(h^{-1})t^1_{h^{-1}g,e,k} \circ t^2_{g,h,k} \ ,
\]
which in turn holds by \eqrefT{2} (set $a=k$ there).
\end{proof}

This completes the proof of Theorem~\ref{thm:main}.

\newcommand\arxiv[2]      {\href{http://arXiv.org/abs/#1}{#2}}
\newcommand\doi[2]        {\href{http://dx.doi.org/#1}{#2}}

\small

\end{document}